\documentclass[a4paper]{amsart}
\usepackage{fullpage}
\usepackage[english]{babel}
\usepackage[utf8]{inputenc}
\usepackage{amsmath,amsfonts,amsthm}
\usepackage{mathtools}
\usepackage{graphicx}
\usepackage{subcaption}
\usepackage{type1cm}
\usepackage{eso-pic}
\usepackage{color}
\usepackage{pgfplotstable}
\usepackage{pgfplots}
\pgfplotsset{compat=1.5}
\usepackage{caption}
\makeatother
\usepackage{enumitem}
\usepackage{hyperref}
\usepackage{placeins}
\makeatletter
\AtBeginDocument{%
  \expandafter\renewcommand\expandafter\subsection\expandafter
    {\expandafter\@fb@secFB\subsection}%
  \newcommand\@fb@secFB{\FloatBarrier
    \gdef\@fb@afterHHook{\@fb@topbarrier \gdef\@fb@afterHHook{}}}%
  \g@addto@macro\@afterheading{\@fb@afterHHook}%
  \gdef\@fb@afterHHook{}%
}
\makeatother
\usepackage{tikz,tikz-cd}
\usepackage{float}
\usetikzlibrary{patterns}
\usetikzlibrary{calc}

\newtheorem{theorem}{Theorem}[section]
\newtheorem{lemma}[theorem]{Lemma}
\newtheorem{remark}[theorem]{Remark}
\newtheorem{problem}[theorem]{Problem}

\newtheorem{definition}[theorem]{Definition}
\newtheorem{proposition}[theorem]{Proposition}

\newcommand{\logLogSlopeTriangleInv}[6]
{
    \pgfplotsextra
    {
        \pgfkeysgetvalue{/pgfplots/xmin}{\xmin}
        \pgfkeysgetvalue{/pgfplots/xmax}{\xmax}
        \pgfkeysgetvalue{/pgfplots/ymin}{\ymin}
        \pgfkeysgetvalue{/pgfplots/ymax}{\ymax}

        \pgfmathsetmacro{\xArel}{#1}
        \pgfmathsetmacro{\yArel}{#3}
        \pgfmathsetmacro{\xBrel}{#1+#2}
        \pgfmathsetmacro{\yBrel}{\yArel}
        \pgfmathsetmacro{\xCrel}{\xArel}

        \pgfmathsetmacro{\lnxB}{\xmin*(1-(#1+#2))+\xmax*(#1+#2)} 
        \pgfmathsetmacro{\lnxA}{\xmin*(1-#1)+\xmax*#1} 
        \pgfmathsetmacro{\lnyA}{\ymin*(1-#3)+\ymax*#3} 
        \pgfmathsetmacro{\lnyC}{\lnyA+#4*(\lnxA-\lnxB)}
        \pgfmathsetmacro{\yCrel}{\lnyC-\ymin)/(\ymax-\ymin)} 

        \coordinate (A) at (rel axis cs:\xArel,\yArel);
        \coordinate (B) at (rel axis cs:\xBrel,\yBrel);
        \coordinate (C) at (rel axis cs:\xCrel,\yCrel);

        \draw[#5]   (A)-- node[pos=0.5,anchor=south] {1}
                    (B)-- 
                    (C)-- node[pos=0.5,anchor=west] {#6}
                    cycle;
    }
}

\title{The Johnson-N\'ed\'elec FEM-BEM coupling for magnetostatic problems in the isogeometric framework}

\author{Mehdi Elasmi}
\address{Technische Universität Darmstadt, Institute for Accelerator Science and Electromagnetic Fields, Schlossgartenstra\ss{}e 8, 64289 Darmstadt, Germany}
\email{elasmi@temf.tu-darmstadt.de}

\author{Christoph Erath}
\address{University College of Teacher Education Vorarlberg (PH Vorarlberg), Liechtensteinerstrasse 33-37, 6800 Feldkirch, Austria}
\email{christoph.erath@ph-vorarlberg.ac.at}

\author{Stefan Kurz}
\address{University of Jyväskylä, Faculty of Information Technology, PO Box 35, FI-40014 University of Jyväskylä, Finland}
\email{stefan.m.kurz@jyu.fi}

\thanks{M. Elasmi (corresponding author): Technische Universität Darmstadt, Germany; elasmi@temf.tu-darmstadt.de;
The research of this author was supported in parts by the \emph{Excellence Initiative} 
  of the German Federal and State Governments 
  and the \emph{Graduate School CE} within the \emph{Centre for Computational Engineering} at Technische Universit\"at Darmstadt.}
\thanks{C. Erath: University College of Teacher Education Vorarlberg (PH Vorarlberg), Austria; christoph.erath@ph-vorarlberg.ac.at}
\thanks{S. Kurz: University of Jyväskylä, Finland; stefan.m.kurz@jyu.fi}
\date{\textbf{\today}}
%
%

\DeclareMathOperator{\Div}{div}

\DeclareMathOperator{\curl}{curl}
\DeclareMathOperator{\bcurl}{{\bf curl}}
\DeclareMathOperator{\Span}{span}

\DeclareMathOperator{\I}{\mathcal I}
\DeclareMathOperator{\dtrace}{\boldsymbol{\gamma}_\mathrm{D}}
\DeclareMathOperator{\dtracex}{\boldsymbol{\gamma}_\mathrm{D}^\mathrm{e}}
\DeclareMathOperator{\ntrace}{\boldsymbol{\gamma}_\mathrm{N}}
\DeclareMathOperator{\ntraceM}{\boldsymbol{\gamma}^\M_\mathrm{N}}
\DeclareMathOperator{\ntracex}{\boldsymbol{\gamma}_\mathrm{N}^\mathrm{e}}
\DeclareMathOperator{\twtrace}{\boldsymbol{\gamma}_\times}

\newcommand{\Id}{{\operatorname{Id}}}
\newcommand{\opd}{{\,\operatorname{d}}}

\newcommand{\norm}[1]{{\left\lVert #1 \right\rVert}}
\newcommand{\eqnorm}[1]{{\left\vert\kern-0.25ex\left\vert\kern-0.25ex\left\vert #1 
    \right\vert\kern-0.25ex\right\vert\kern-0.25ex\right\vert}}
    
\newcommand{\bh}{{H^1(\Omega)}}
\newcommand{\bH}{{\boldsymbol{\mathcal{H}}}}
\newcommand{\bHl}{{\boldsymbol{\mathcal{H}}_\ell}}

\newcommand{\hcurl}{{\boldsymbol{H}(\bcurl,\Omega)}}

\newcommand{\hzgcurl}{{\boldsymbol{H}_{\boldsymbol{0}}(\bcurl,\Omega)}}
\newcommand{\hdiv}{{\boldsymbol{H}(\Div,\Omega)}}
\newcommand{\hdivK}{{\boldsymbol{H}(\Div,K)}}

\newcommand{\hdivzh}{{\boldsymbol{H}_0(\Div 0,\Omega)}}

\newcommand{\hcurlcurl}{{\boldsymbol{H}(\bcurl\,\bcurl,\Omega)}}
\newcommand{\hcurlcurlK}{{\boldsymbol{H}(\bcurl\,\bcurl,K)}}
\newcommand{\hcurlcurle}{{\boldsymbol{H}_\mathrm{loc}(\bcurl\,\bcurl,\Omega^\mathrm{e})}}

\newcommand{\hcurlK}{{\boldsymbol{H}(\bcurl,K)}}

\newcommand{\hcurle}{{\boldsymbol{H}_\mathrm{loc}(\bcurl,\Omega^\mathrm{e})}}

\newcommand{\bV}{{\boldsymbol{V}_\ell}}
\newcommand{\bVz}{{\boldsymbol{V}_{0,\ell}}}
\newcommand{\bVzp}{{\boldsymbol{V}_{0,\ell}^\perp}}

\newcommand{\bX}{{\boldsymbol{X}_{\ell}}}
\newcommand{\bXz}{{\boldsymbol{X}_{0,\ell}}}

\newcommand{\hcurlg}{{\boldsymbol{H}^{-\frac{1}{2}}(\curl_\Gamma,\Gamma)}}

\newcommand{\hdivg}{{\boldsymbol{H}^{-\frac{1}{2}}(\Div_\Gamma,\Gamma)}}

\newcommand{\hdivzg}{{\boldsymbol{H}^{-\frac{1}{2}}(\Div_\Gamma 0,\Gamma)}}

\newcommand{\bl}{{\boldsymbol{L}^2(\Omega)}}
\newcommand{\ble}{{\boldsymbol{L}_\mathrm{loc}^2(\Omega^\mathrm{e})}}
\newcommand{\spht}{{H^{\frac{1}{2}}(\Gamma)}}
\newcommand{\dht}{{H^{-\frac{1}{2}}(\Gamma)}}

\newcommand{\hcurlnormq}[1]{{\left\lVert #1 \right\rVert^2_{\operatorname{\hcurl}}}}

\newcommand{\lnormq}[1]{{\left\lVert #1 \right\rVert^2_{\operatorname{\bl}}}}
\newcommand{\abs}[1]{{\left\lvert #1 \right\rvert}}
\newcommand{\duality}[1]{{\left\langle #1 \right\rangle}}
\newcommand{\scalar}[1]{{\left( #1 \right)_\Omega}}

\newcommand{\scalarcurl}[1]{{\left( #1 \right)_{\bcurl,\Omega}}}

\newcommand{\R}{{\mathbb R}}
\newcommand{\bn}{{\boldsymbol n}}
\newcommand{\bx}{{\boldsymbol x}}
\newcommand{\by}{{\boldsymbol y}}

\newcommand{\ba}{{\boldsymbol b}}

\newcommand{\bell}{{\boldsymbol \ell}}

\newcommand{\bu}{{\boldsymbol u}}
\newcommand{\bul}{{\boldsymbol u}_\ell}
\newcommand{\uvec}{{\mathbf{u}}}
\newcommand{\uvecl}{{\mathbf{u}_\ell}}
\newcommand{\bv}{{\boldsymbol v}}
\newcommand{\bvl}{{\boldsymbol v}_\ell}
\newcommand{\vvec}{{\mathbf{v}}}
\newcommand{\vvecl}{{\mathbf{v}_\ell}}
\newcommand{\bw}{{\boldsymbol w}}

\newcommand{\wvec}{{\mathbf{w}}}

\newcommand{\bpsi}{{\boldsymbol \psi}}
\newcommand{\bpsil}{{\boldsymbol \psi}_\ell}

\newcommand{\bxi}{{\boldsymbol \xi}}

\newcommand{\bvphi}{{\boldsymbol \varphi}}
\newcommand{\bXi}{{\boldsymbol \Xi}}
\newcommand{\bXit}{\widetilde{\boldsymbol \Xi}}
\newcommand{\bi}{{\boldsymbol i}}
\newcommand{\bj}{{\boldsymbol j}}
\newcommand{\bc}{{\boldsymbol c}}
\newcommand{\bp}{{\boldsymbol p}}
\newcommand{\bpt}{\widetilde{\boldsymbol p}}
\newcommand{\bk}{{\boldsymbol k}}
\newcommand{\rom}[1]{\uppercase\expandafter{\romannumeral #1\relax}}
\newcommand{\bphi}{{\boldsymbol \phi}}
\newcommand{\bphil}{{\boldsymbol \phi}_\ell}
\newcommand{\bue}{{\boldsymbol{u}^\mathrm{e}}}
\newcommand{\bve}{{\boldsymbol{v}^\mathrm{e}}}

\newcommand{\V}{{\mathcal V}}
\newcommand{\Cal}{{\mathcal P}}
\newcommand{\SL}{{\mathcal A_0}}
\newcommand{\Astab}{{\widetilde{\mathcal{B}}}}

\newcommand{\DL}{{\mathcal C_0}}
\newcommand{\Steklov}{{\mathcal S}^\mathrm{int}}
\newcommand{\Steklove}{{\mathcal S}^\mathrm{ext}}
\newcommand{\DLA}{{\mathcal B_0}}
\newcommand{\M}{{\mathcal U}}
\newcommand{\HS}{{\mathcal N_0}}
\newcommand{\bPsi}{{\boldsymbol \Psi}}

%

\begin{document}

\begin{abstract}
We consider a Johnson-N\'ed\'elec FEM-BEM coupling, which is a direct and non-symmetric coupling of finite and boundary element methods, in order to solve interface problems for the magnetostatic Maxwell's equations with the magnetic vector potential ansatz. In the FEM-domain, equations may be non-linear, whereas they are exclusively linear in the BEM-part to guarantee the existence of a fundamental solution. First, the weak problem is formulated in quotient spaces to avoid resolving to a saddle point problem. Second, we establish in this setting well-posedness of the arising problem using the framework of Lipschitz and strongly monotone operators as well as a stability result for a special type of non-linearity, which is typically considered in magnetostatic applications. Then, the discretization is performed in the isogeometric context, i.e., the same type of basis functions that are used for geometry design are considered as ansatz functions for the discrete setting. In particular, NURBS are employed for geometry considerations, and B-Splines, which can be understood as a special type of NURBS, for analysis purposes. In this context, we derive a~priori estimates w.r.t. $h$-refinement, and point out to an interesting behavior of BEM, which consists in an amelioration of the convergence rates, when a functional of the solution is evaluated in the exterior BEM-domain. This improvement may lead to a doubling of the convergence rate under certain assumptions. Finally, we end the paper with a numerical example to illustrate the theoretical results, along with a conclusion and an outlook.\\
 \bigskip

\noindent \textbf{Keywords.} Finite Element Method, Boundary Element Methods, Johnson-N\'ed\'elec coupling, non-symmetric coupling, Isogeometric Analysis, non-linear operators, curl-curl equations, quotient spaces,  magnetostatics, vector potential formulation, well-posedness, a~priori estimates, super-convergence \bigskip \\
\noindent \textbf{Mathematics Subject Classification.}
\end{abstract}
\maketitle
\section{Introduction} \label{sec:introduction}
In modern engineering sciences, a broad spectrum of applications such as antennas, electric machines, linear accelerators, etc., involves electromagnetic fields. Mathematically, the foundation of electromagnetism in its classical form relies on a set of four partial differential equations called Maxwell's equations, see, e.g., \cite{griffiths2005introduction,jackson2009classical} for an introduction to the subject. In particular, when the currents and charges are time-invariant, the electric and magnetic components of the field can be considered separately, which leads to the so called electrostatic and magnetostatic cases, respectively. In this work, we will focus on the latter. For more information about the formulations and equations encountered in magnetostatics, we refer the reader to \cite{touzani2014mathematical} and the references cited therein, for instance. We consider the following model problem.
\begin{problem}\label{prob:strongForm} Let $\Omega \subset \R^3$ be a Lipschitz bounded domain with boundary $\Gamma$ and exterior domain $\Omega^\mathrm{e}:= \R^3\backslash\overline{\Omega}$. We aim to find $(\bu,\bue)$ for some appropriate right-hand sides $(\boldsymbol{f},\boldsymbol{u}_0,\boldsymbol{\phi}_0)$ such that
	\begin{subequations}
		\begin{align}
		\bcurl \M \bcurl \bu &= \boldsymbol{f} && \text{in } \Omega, \label{eq:curlcurlInt}\\
		\bcurl \bcurl \bue &= \boldsymbol{0} && \text{in } \Omega^\mathrm{e}, \label{eq:curlcurlExt}\\
		\Div \bu & = 0 && \text{in } \Omega,\label{eq:divFreeInt} \\ 
		\Div \bue & = 0 && \text{in } \Omega^\mathrm{e},\label{eq:divFreeExt} \\
		\dtrace \bu - \dtracex \bue &= \boldsymbol{u}_0 && \text{on } \Gamma, \label{eq:DirichletJump}\\
		\ntraceM \bu - \ntracex \bue &= \boldsymbol{\phi}_0 && \text{on } \Gamma,\label{eq:NeumannJump}\\
		\bue &= \mathcal{O}\left(\abs{\bx^{-1}}\right) && \text{for } \abs{\bx}\rightarrow \infty. \label{eq:radiationCond}
		\end{align}
	\end{subequations}
\end{problem}  
\noindent In the context of magnetostatics, \eqref{eq:curlcurlInt} and \eqref{eq:curlcurlExt} model the magnetic vector potential inside and outside the domain of interest $\Omega$, respectively. Thereby, the right-hand side $\boldsymbol{f}$ is a compactly supported current density in $\Omega$, and $\M$ is a possibly non-linear reluctivity tensor, which models the material's magnetic properties of the domain. The absence of a material tensor in the second equation means that we are dealing with exterior domains that are exclusively filled with linear materials, where the simplest form is air. Equations \eqref{eq:divFreeInt} and \eqref{eq:divFreeExt} are imposed gauging conditions that guarantee the uniqueness of the vector fields, cf. \cite[Section~2.2.4]{touzani2014mathematical}. Since we are dealing with an interface problem, the transition of the field from $\Omega$ to $\Omega^\mathrm{e}$ is not necessarily continuous. For this, and in order to keep the formulation general, we prescribe in \eqref{eq:DirichletJump} and \eqref{eq:NeumannJump} boundary conditions for both the Dirichlet and Neumann data. This can be interpreted as a jump condition of the tangential component of the magnetic flux, and as a jump of the normal component of the magnetic field across the boundary $\Gamma$, respectively. The trace operators $\dtrace$ and $\ntraceM$ (and $\dtracex$ and $\ntracex$) will be characterized below. The last equation \eqref{eq:radiationCond} determines the decay behavior of the field at infinity. This decay condition is needed to guarantee uniqueness of the exterior solution. \\

We aim to solve Problem \ref{prob:strongForm} numerically. A straightforward choice is a coupling of Finite Element Method (FEM) and Boundary Element Methods (BEM). This allows, by using FEM in bounded domains, an efficient treatment of non-linear materials and non-homogeneous equations, and by using BEM for unbounded domains, an attractive way to deal with exterior problems. Note that BEM is only applicable, if a fundamental solution exists. In this case, this condition is fulfilled, since we are considering linear materials in the exterior domain. This leads to the same type of fundamental solution as for the Laplace equation. In contrast to FEM only approaches in unbounded domains, BEM avoids the meshing of a fictitious domain surrounding the region of interest, and the necessary truncation of the whole domain. Hence, it avoids the imposition of non-physical boundary conditions that yields approximation errors. In fact, BEM relies on an integral formulation that transfers the problem from the exterior domain to the boundary. Thus, the task reduces to find the complete Cauchy data on the boundary. A representation formula leads to the solution in every point of the domain in dependence of the Cauchy data and fundamental solution. Moreover, as demonstrated  in \cite{EEK20} for the two-dimensional Laplacian interface problem, the possible super-convergence of BEM may be particularly advantageous, especially if we are aiming for the solution or some derived quantities evaluated in the exterior domain. For example, the calculation of forces and torques may profit from this behavior of BEM, which concretely may double the convergence rate in certain circumstances.\\
In the literature, several manuscripts considered FEM and/or BEM to solve magnetostatic problems. In the following, we only mention a few of them for convenience. For example, we refer the reader to \cite{coulomb1981finite,bossavit1988rationale,magele1988comparison}, for FEM only, and to \cite{lindholm1980notes,mayergoyz1987new,buchau2003comparison} for BEM only approaches. For the FEM-BEM coupling, we could classify the methods depending on the considered unknowns. The first type of approaches defines auxiliary quantities. For instance, \cite{frangi2003magneto,hafla2006accuracy,salgado2008symmetric} are based on the introduction of a total and/or reduced scalar magnetic potential. These methods suffer from a lack of stability for high permeable materials. In \cite{kuhn2002symmetric}, this drawback was circumvented for simply-connected domains by employing a vector potential formulation for the FEM part, and by the introduction of a total scalar magnetic potential in the exterior BEM domain. Afterwards, the consideration of a reduced scalar magnetic potential for the BEM part, as proposed in \cite{pusch2010robust}, relaxed the topology assumption of the previous scheme. The second type of approaches would be to consider Problem \ref{prob:strongForm} ``as it is'', i.e., a vector potential formulation for both the FEM and BEM domains. This is the case in \cite{kielhornrobust} for the eddy-current problem. Note that the magnetostatic equations appear as a limiting case of the eddy-current approximation, namely, in the absence of conducting regions. In this context, we recall some other works that adopt a similar approach: a mathematical foundation of the symmetric BEM-BEM coupling for eddy-currents is presented in \cite{hiptmair2005coupled}, and a rigorous analysis of the symmetric FEM-BEM coupling for an $E$-based eddy-current model can be found in \cite{hiptmairSymmetric}.   
Another classification factor could be the type of coupling, i.e., the equations of BEM that are used to supplement the FEM part. We distinguish three main coupling types: a direct symmetric coupling \`a la Costabel \cite{costabel1988symmetric}, an indirect non-symmetric coupling as introduced by Bielak and MacCamy in \cite{bielak1983exterior}, and a Johnson-N\'ed\'elec coupling, which is a direct non-symmetric scheme, see \cite{johnson1980coupling}. In the cited references, the FEM-BEM couplings are mostly based on the symmetric type. Its validity and well-posedness are demonstrated both analytically and numerically for several Maxwell based models. Beside the cited literature, see also \cite{hiptmair2003coupling} for the electromagnetic scattering problem. In \cite{kielhornrobust}, and \cite{BRUCKNER20121862}, a numerical validation is also provided for the second type of coupling dedicated to eddy-current problems using a vector potential formulation, and for the Johnson-N\'ed\'elec coupling with a reduced magnetic potential approach in magnetostatics, respectively. In this work, we address the mathematical foundation of the direct non-symmetric Galerkin FEM-BEM coupling for the vector potential based formulation. To the authors' knowledge, this is done for the presented type of equations for the first time in this paper. In addition, we discretize the obtained variational formulation using an isogeometric framework. Isogeometric Analysis (IGA) was first proposed in \cite{HUGHES20054135}. It relies on B-Splines and Non-Uniform Rational B-Splines (NURBS), which are widely used in Computer Aided Design (CAD) softwares, see \cite{piegl2012nurbs}, for an extensive introduction to B-Splines and NURBS. Hence, employing the same type of basis functions for the design and the numerical discretization facilitates the unification of two processes that are typically separated, namely, design and analysis. This allows an exact representation of the geometry and an efficient $h$-refinement that does not alter the original geometry. Moreover, $p$-refinement is merely achieved thanks to the definition of B-Splines and NURBS. IGA reveals to be a promising alternative to classical FEM and BEM in many electromagnetic applications, see \cite{corno2016isogeometric,dolz2019numerical}, for instance. A rigorous mathematical analysis for isogeometric FEM started in \cite{Bazilevs2006}, and for isogeometric Galerkin BEM in \cite{GantnerGregor2014AiB}. For more details and references we refer to \cite{EEK20}.\\
The rest of this paper is organized as follows: Section \ref{sec:preliminaries} is devoted to the introduction of the involved mathematical framework, i.e., the relevant vector spaces,  trace operators, and surface differential operators. The definitions given herein are based on the results of the pioneering works \cite{buffaTracesIntegrationParts,buffa2001traces,buffaTraces}. Then, we dedicate Section \ref{sec:weakFormulation} to the derivation of variational formulations for the interior and exterior problems. We adopt a formulation of the problem in quotient spaces, as done in \cite{KurzDF}, to separate the infinite-dimensional kernel of the $\bcurl$ operator, which avoids the introduction of a saddle point problem. With this, we derive in Section \ref{sec:fem-bem} a direct non-symmetric FEM-BEM coupling, and establish well-posedness and stability of the formulation using the framework of Lipschitz continuous and strongly monotone operators, such as given in \cite{zeidler}. In Section \ref{sec:iga}, we introduce the conforming isogeometric discretization scheme, analyze the resulting discrete setting, and derive a priori estimates for $h$-refinements. For this, we rely particularly on the results of \cite{buffa2010isogeometric} and \cite{buffa2020multipatch}, which provide approximation properties of B-Splines in $\hcurl$ and its trace spaces, respectively. The penultimate Section \ref{sec:results} is dedicated to the numerical confirmation of the theoretical results, and we complete the work with a conclusion and an outlook in Section \ref{sec:conclusion}. The focus of this work is on mathematical modeling and numerical analysis. The numerical example serves as a simple but representative proof-of-concept to validate the theoretical convergence rates. More realistic engineering examples shall be deferred to future work.        
\section{Functional setting} \label{sec:preliminaries}
We present in this section the mathematical framework that is needed for the derivation of a variational formulation for Problem \ref{prob:strongForm}.
Throughout this work, we consider $\Omega \subset \R^3$ to be a Lipschitz bounded domain with boundary $\Gamma$. Moreover, $\Omega$ is assumed to be simply-connected with connected boundary $\Gamma:= \partial \Omega$. Furthermore, we denote by $\Omega^\mathrm{e}$ the exterior domain w.r.t. $\Omega$, i.e., $\Omega^\mathrm{e} = \R^3\backslash\overline{\Omega}$, and by $\bn(\bx)$ an outer normal vector w.r.t. $\Omega$ that is evaluated at $\bx \in \Gamma$. \\

First, we start with the corresponding energy spaces in the interior and exterior domains. 
 \begin{align*}
  \hcurl &:= \{ \bv \in \bl: \, \bcurl \bv \in \bl \},\\
  \hcurle &:= \{ \bve \in \ble: \, \bcurl \bve \in \ble \},\\
  \hcurlcurl &:= \{ \bv \in \hcurl: \, \bcurl\bcurl \bv \in \bl \},\\
  \hcurlcurle &:= \{ \bve \in \hcurle: \, \bcurl\bcurl \bve \in \ble \},  
 \end{align*}
where $\bl$ is a standard Lebesgue space, which has to be understood here in a vectorial sense, i.e., $\bl := \left( L^2(\Omega) \right)^d $, $d\leq 3$, and $\ble$ denotes the space of functions with local $\boldsymbol{L}^2$-behavior, i.e.,
\begin{equation*}
\ble := \{ \bve : \Omega^\mathrm{e} \rightarrow \R \vert \, \bve_{\vert K} \in \boldsymbol{L}^2(K) \text{ for all } K \subset \overline{\Omega^\mathrm{e}} \text{ compact} \}.
\end{equation*}
The energy spaces are equipped with the usual inner products, which induce natural graph norms. For instance, the $\hcurl$ inner product and its induced norm read 
\begin{align*}
\scalarcurl{\bv,\bw} &:= \scalar{\bv,\bw} + \scalar{\bcurl \bv,\bcurl \bw}, && \bv,\bw \in \hcurl,\\
\hcurlnormq{\bv} &:=  \scalarcurl{\bv,\bv} = \norm{\bv}_\bl^2 + \norm{\bcurl\bv}_\bl^2, && \bv \in \hcurl,
\end{align*}	
respectively.
Then, we require suitable trace spaces for $\hcurl$. For this, surface differential operators that act on tangential vector fields play an important role. Let $\nabla_\Gamma$ and $\bcurl_\Gamma$ denote the surface gradient and the vectorial surface curl-operator, respectively. In particular, we will encounter in this work their adjoint operators $\Div_\Gamma$ and $\curl_\Gamma$. They are defined by duality as
\begin{equation}\label{eq:diffOperator}
\duality{\Div_\Gamma \bv_{|\Gamma},w_{|\Gamma}}_\Gamma = -\duality{\bv_{|\Gamma},\nabla_\Gamma w_{|\Gamma}}_\Gamma, \quad \duality{\curl_\Gamma \bv_{|\Gamma},w_{|\Gamma}}_\Gamma = \duality{\bv_{|\Gamma},\bcurl_\Gamma w_{|\Gamma}}_\Gamma,
\end{equation}
and are called the surface divergence operator and the scalar surface curl-operator, respectively. Thereby, $\duality{\cdot,\cdot}_\Gamma$ denotes the natural duality pairing, which is obtained by the extended $\boldsymbol{L}^2$-scalar product. A rigorous definition of the above mentioned surface differential operators for Lipschitz domains, and the relation \eqref{eq:diffOperator} are found in \cite[Section~3]{buffaTraces}. In the same work, concretely in \cite[Section~4]{buffaTraces}, a characterization of suitable trace spaces for $\hcurl$ is given. In the following, we present some relevant results for convenience, and refer the reader to the cited paper, and e.g., to \cite{hiptmairSymmetric} for more details.
\begin{definition}
Let $\bv \in \boldsymbol{C}_0^\infty(\overline{\Omega})$ be a smooth vector field with compact support in $\overline{\Omega}$. The tangential (Dirichlet) surface trace $\dtrace$ and the twisted tangential surface trace operator $\twtrace$ read 
\begin{align*}
\dtrace \bv (\bx) &:=  \bn(\bx) \times \left( \bv(\bx){\vert_\Gamma} \times \bn(\bx) \right), \\
 \twtrace \bv (\bx) &:=  \bv(\bx){\vert_\Gamma} \times \bn(\bx)
\end{align*}
for a.e. $\bx \in \Gamma$. By extending $\dtrace$ and $\twtrace$ by continuity to $\boldsymbol{H}^1(\Omega)$, we define $V_\times:=\twtrace(\boldsymbol{H}^1(\Omega))$ and $V_\mathrm{D}=\dtrace(\boldsymbol{H}^1(\Omega))$. With this, the relevant trace spaces are given by
\begin{align*}
\hdivg &:= \{\bpsi \in V_\times^\prime: \Div_\Gamma \bpsi \in \dht\} ,\\
\hcurlg &:= \{\bpsi \in V_\mathrm{D}^\prime: \curl_\Gamma \bpsi \in \dht\},
\end{align*}
where $V_\times^\prime$ and $V_\mathrm{D}^\prime$ denote the dual spaces of $V_\times$ and $V_\mathrm{D}$, respectively. The trace spaces $\hdivg$ and $\hcurlg$ are endowed with the natural graph norms, where the $V_\times$ and $V_\mathrm{D}$ norms are given by
\begin{align*}
\norm{\bpsi}_{V_\times} &:= \inf_{\bxi \in \boldsymbol{H}^\frac{1}{2}(\Gamma)} \{ \norm{\bxi}_{\boldsymbol{H}^\frac{1}{2}(\Gamma)}: \twtrace \bxi = \bpsi\},\\
\norm{\bpsi}_{V_\mathrm{D}} &:= \inf_{\bxi \in \boldsymbol{H}^\frac{1}{2}(\Gamma)} \{ \norm{\bxi}_{\boldsymbol{H}^\frac{1}{2}(\Gamma)}: \dtrace \bxi = \bpsi\},
\end{align*}
respectively. Thereby, $\boldsymbol{H}^\frac{1}{2}(\Gamma)$ is the standard trace space of $\boldsymbol{H}^1(\Omega)$.
\end{definition}
 \begin{remark} Along an edge, a function in $V_\mathrm{D}$ (in $V_\times$) exhibits a weak tangential continuity (a weak normal continuity).
\end{remark}%
\noindent From \cite[Theorem~4.1]{buffaTraces}, we also know that the following extensions are valid, such that   
\begin{align*}
\dtrace: \hcurl &\rightarrow \hcurlg, \\
\twtrace: \hcurl & \rightarrow \hdivg
\end{align*}
are linear, continuous, and surjective. Moreover, note that the spaces $\hcurlg$ and $\hdivg$ are dual, see \cite[Section~4]{buffa2001traces}. This duality can be seen from the Green's formula,
 \begin{equation}\label{eq:Green1}
\scalar{\bw,\bcurl \bv } - \scalar{\bcurl \bw,\bv}  = \duality{\twtrace \bw, \dtrace \bv}_\Gamma,
\end{equation}
which holds for all $\bv,\bw \in \hcurl$, see \cite{buffaTracesIntegrationParts}, for instance. \\
To complete the Cauchy data, let $\ntraceM \bv := \twtrace \M \bcurl\bv$ denote the Neumann trace operator. Thereby, and throughout the rest of this work, $\M:\R^3 \rightarrow \R^3$ is a possibly non-linear operator, which is assumed to be Lipschitz continuous and strongly monotone, i.e., 
\begin{enumerate}[label=(A\arabic*)]
	\item \label{A1} Lipschitz continuity: 
	\[ \exists \, C^\M_\mathrm{L}>0  \text{ such that } \vert \mathcal{U} \bx - \mathcal{U}\by \vert \leq C^\M_\mathrm{L}\, \vert \bx - \by\vert \qquad \forall \bx,\by \in \R^3.\]
	\item \label{A2} strong monotonicity: \[ \exists \, C^\M_\mathrm{M}>0 \text{ such that }  
	\scalar{ \M \bv - \M \bw, \bv - \bw } \geq C^\M_\mathrm{M}\, \norm{\bv - \bw}_\bl^2 \qquad \forall \bv,\bw \in \bl  .\]
\end{enumerate}  
By integrating the square of \ref{A1}, we obtain for $\bv,\bw \in \hcurl$ 
\begin{equation*}
\lnormq{ \mathcal{U} \bcurl \bv - \mathcal{U}\bcurl\bw } \leq (C^\M_\mathrm{L})^2 \, \lnormq{ \bcurl\bv - \bcurl\bw}.
\end{equation*}
Hence, $\ntraceM \bv$ is well-defined. Moreover, inserting $\bw := \M\bcurl \bu$ in \eqref{eq:Green1} yields for $\bu \in \hcurlcurl$
\begin{equation}\label{eq:Green2}
\scalar{\M\bcurl\bu,\bcurl \bv }-\scalar{\bcurl\M\bcurl \bu,\bv}  = \duality{\ntraceM \bu, \dtrace \bv}_\Gamma .
\end{equation}
Therefore, it follows that \[\ntraceM : \hcurlcurl \rightarrow \hdivg\] is also linear, continuous, and surjective \cite[Lemma~3.3]{hiptmairSymmetric}. For notational simplicity, whenever $\M = \Id$, with $\Id$ denoting the identity operator, we write $\ntrace$ instead of $\ntraceM$. \\

\begin{figure}[t!]
\begin{tikzcd}
H^1(\Omega) \arrow[r, "\nabla"] \ar[dd,"\gamma_0"]
& \hcurl \arrow[r, "\bcurl"] \arrow[d,"\dtrace"] \ar[ddd, "\twtrace"pos=0.75, bend right=55]
& \hdiv \arrow[r, "\Div"] \ar[dd,"\gamma_\bn"]
& L^2(\Omega)\\
& \hcurlg \ar[dr,"\curl_\Gamma"] \ar[dd,"\times\bn"]
&
&[1.5em] \\
\spht \ar[ur, "\nabla_\Gamma"] \ar[dr, "\bcurl_\Gamma"']
&
& \dht \\
& \hdivg \ar[ur,"\Div_\Gamma"']
&
&
\end{tikzcd} 
\caption{The de~Rham complex in a three-dimensional domain $\Omega$ and on its boundary $\Gamma$.}\label{fig:deRham}
 \end{figure}%
For convenience, we summarize in Figure \ref{fig:deRham} the spaces, the trace operators, and differential operators in a de~Rham complex, see \cite[Fig.~2]{buffa2020multipatch}. Thereby, the trace $\gamma_\bn: \hdiv \rightarrow \dht$ denotes the standard normal trace, applied to vector fields, and $\gamma_0: H^1(\Omega) \rightarrow \spht$ is the standard trace operator in the scalar case, see \cite[Chapter~1]{steinbach}, for instance. \\

For the exterior domain, we denote the exterior traces by $\gamma^\mathrm{e}_\bn$, $\dtracex$ and $\ntracex$.  They are defined analogously to $\gamma_\bn$, $\dtrace$ and $\ntrace$ with respect to the exterior domain $\Omega^\mathrm{e}$, respectively, and the same mapping properties hold. \\ 
 
 \section{Weak formulations and quotient spaces} \label{sec:weakFormulation}
 This section is devoted to the derivation of boundary integral equations for the exterior problem, and of a weak formulation for the interior one. \\
 
 In order to introduce the boundary integral operators, which arise from the boundary integral formulation, we consider a simpler problem, which consists of either an interior or an exterior problem only, depending on $K = \{\Omega, \Omega^\mathrm{e}\}$. Let $\bu \in \hcurlcurlK \cap \hdivK$ such that
\begin{subequations}\label{prob:intExtCurlCurl}
\begin{align} 
 \bcurl \bcurl \bu &= \boldsymbol{0} &&\text{in } K, \label{eq:intExtCurlCurlDomain}\\
\Div \bu &= 0 &&\text{in } K, \\
 \bu &= \mathcal{O}\left(\abs{\bx^{-1}}\right) && \text{for } \abs{\bx}\rightarrow \infty, \text{ if } K = \Omega^\mathrm{e}.
\end{align}    
 \end{subequations}
Similarly to \cite [Section~3]{hiptmair2005coupled}, we introduce the relevant potential operators that are needed for a Stratton-Chu-like representation of the solution, i.e., linear operators that map boundary data to smooth functions in $\R^3\backslash\Gamma$. We denote by 
\begin{equation*}
	\Psi_{\mathrm{SL}}(\varphi)(\bx) = \int_\Gamma G(\bx,\by) \varphi(\by) \opd \sigma_\by, \quad \bx\notin \Gamma
\end{equation*}
the scalar single-layer potential, by 
\begin{equation*}
\bPsi_{\mathrm{SL}}(\bphi)(\bx) = \int_\Gamma G(\bx,\by)  \bphi(\by) \opd \sigma_\by, \quad \bx\notin \Gamma
\end{equation*}
the vectorial single-layer potential, and by 
\begin{equation*}
\bPsi_{\mathrm{DL}}(\bv)(\bx) = \bcurl_\bx \bPsi_{\mathrm{SL}}\left(\bv \times \bn \right)(\bx) 
\end{equation*}
the Maxwell double-layer potential. Thereby, $G(\bx,\by) = \frac{1}{4\pi}\frac{1}{\abs{\bx-\by}}$ coincides with the fundamental solution of the Laplace equation. With this, 
 the solution $\bu$ of \eqref{prob:intExtCurlCurl} admits the following representation formula:
 \begin{equation}\label{eq:representationFormula}
 	\text{For } \bx \in K \text{ and }\alpha = 0,1, \quad  \bu(\bx) =  (-1)^\alpha\left(\bPsi_\mathrm{SL}(\ntrace \bu)(\bx) + \bPsi_\mathrm{DL}(\dtrace \bu)(\bx) + \nabla\Psi_\mathrm{SL}(\gamma_\bn \bu)(\bx)\right),
 \end{equation} 
 where $\alpha = 0$ leads to an interior problem, and $\alpha = 1$ to an exterior one. 
 For notational consistency, if $K = \Omega^\mathrm{e}$, we replace $\bu$ by $\bue$, $(\dtrace,\ntrace,\gamma_\bn)$ by $(\dtracex,\ntracex,\gamma^\mathrm{e}_\bn)$, and $\boldsymbol{H}$ by $\boldsymbol{H}_\mathrm{loc}$. 
 \begin{remark}
 The condition $\Div \bu = 0$ is necessary for the derivation of the representation formula \eqref{eq:representationFormula}, as mentioned in \cite[Section~5]{hiptmairSymmetric}. However, we aim to relax this condition in the remaining of this work by considering suitable spaces for the solution. 
 \end{remark}
  Applying the trace operators to \eqref{eq:representationFormula}, and using the jump relations, as given in  \cite [Theorem~3.4]{hiptmair2005coupled}, for instance, yields the following boundary integral equations: For $\alpha = 0$, it holds that
 \begin{subequations}\label{eq:intBIE}
 \begin{align} 
 \dtrace \bu &= \SL \ntrace \bu + (\frac{1}{2} + \DL) \dtrace \bu + \nabla_\Gamma \V_0(\gamma_\bn \bu) \label{eq:intBIE1} ,\\
 \ntrace \bu &= (\frac{1}{2} + \DLA) \ntrace \bu + \HS \dtrace \bu ,\label{eq:intBIE2}
  \end{align} 
  \end{subequations}
  and for $\alpha = 1$
  \begin{subequations}\label{eq:extBIE}
  \begin{align} 
\dtracex \bue &= -\SL \ntracex \bue + (\frac{1}{2} - \DL) \dtracex \bue - \nabla_\Gamma \V_0(\gamma^\mathrm{e}_\bn \bue) \label{eq:extBIE1} ,\\ 
 \ntracex \bue &= (\frac{1}{2} - \DLA) \ntracex \bue - \HS \dtracex \bue , \label{eq:extBIE2}
  \end{align} 
 \end{subequations}
  where 
  \begin{align*}
  \SL &: \hdivg \rightarrow \hcurlg,\quad \DL: \hcurlg \rightarrow \hcurlg,\\ 
  \DLA &: \hdivg \rightarrow \hdivg, \quad \HS: \hcurlg \rightarrow \hdivg
  \end{align*}
  define continuous and linear boundary integral operators. Note that $\V_0$ is the standard single layer operator from the scalar case, see \cite{sauter}, for instance. Equations \eqref{eq:intBIE} and \eqref{eq:extBIE} can be rewritten in matrix form as 
  \begin{equation*}
  \begin{pmatrix}
  \dtrace \bu \\ \ntrace \bu
  \end{pmatrix} = \left( \frac{1}{2}\widetilde{\Id} + \Cal \right) \begin{pmatrix}
  \dtrace \bu \\ \ntrace \bu \\ \gamma_\bn \bu
  \end{pmatrix} , \quad \begin{pmatrix}
  \dtracex \bue \\ \ntracex \bue
  \end{pmatrix} =\left( \frac{1}{2}\widetilde{\Id} - \Cal \right)  \begin{pmatrix}
  \dtracex \bue \\ \ntracex \bue \\ \gamma_\bn^\mathrm{e} \bue
  \end{pmatrix} ,
  \end{equation*}
  where $\widetilde{\Id} =\left( \Id \,\vert \,\boldsymbol{0}\right) $ denotes the augmented identity operator, and
  \begin{equation*}
  \Cal = \begin{pmatrix}
  \DL & \SL &  \nabla_\Gamma \V_0 \\ \HS &  \DLA & 0
  \end{pmatrix}
  \end{equation*}
a Calder\'on-like operator. In order to extract the Calder\'on projector, we follow \cite[Section~6.1]{KurzDF}, and project \eqref{eq:intBIE1} and \eqref{eq:extBIE1} onto the quotient space $[\hcurlg]= \hcurlg / \nabla_\Gamma \spht$, which consists of the equivalence classes
\begin{equation*}
[\bpsi] := \{  \bpsi + \bxi : \bxi \in \nabla_\Gamma \spht \}, \quad \bpsi \in \hcurlg.
\end{equation*}
We equip it with the norm 
\begin{equation*}
\norm{[\bpsi]}_{[\hcurlg]} := \inf_{\bxi \in \nabla_\Gamma \spht }\{ \norm{\bpsi + \bxi}_\hcurlg \}.
\end{equation*}
In the next Proposition, we give further possible characterizations of the quotient space $[\hcurlg]$. For this, let us first define the following subspace of $\hdivg$
\begin{equation*}
\hdivzg := \{ \bpsi \in \hdivg: \Div_\Gamma \bpsi = 0\}
\end{equation*} 
as the space of solenoidal surface divergence vector fields. Then, we state the following:
\begin{proposition}\label{proposition:representative}
Let $\Omega$ be a bounded Lipschitz domain, which is simply-connected with connected boundary $\Gamma$. Then:
\begin{itemize}
\item The quotient space $[\hcurlg]$ and the orthogonal complement of $\nabla_\Gamma \spht$ in $\hcurlg$ are isometrically isomorphic. We denote $[\hcurlg] \cong (\nabla_\Gamma \spht)^\perp$.
\item The dual space of $[\hcurlg]$ can be identified with the subspace $\hdivzg$.
\end{itemize}
\end{proposition}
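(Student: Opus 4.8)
The plan is to derive both assertions from standard Hilbert-space duality, once the subspace $\nabla_\Gamma \spht$ has been shown to be closed. Throughout I regard $\hcurlg$ as a Hilbert space: its graph norm is induced by the inner products of $V_\mathrm{D}^\prime$ and $\dht$, both of which are Hilbert (the latter directly, the former as the dual of the Hilbert space $V_\mathrm{D}$, itself a quotient of $\boldsymbol{H}^{\frac12}(\Gamma)$). The crucial preliminary step, on which everything rests, is the closedness of $\nabla_\Gamma \spht$ in $\hcurlg$. Since $\curl_\Gamma \nabla_\Gamma = 0$ one has the inclusion $\nabla_\Gamma \spht \subseteq \ker(\curl_\Gamma : \hcurlg \to \dht)$. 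Under the standing hypothesis that $\Omega$ is simply connected with connected boundary $\Gamma$, the surface de~Rham complex of Figure~\ref{fig:deRham} is exact at $\hcurlg$, so this inclusion is an equality, $\nabla_\Gamma \spht = \ker(\curl_\Gamma)$. As the kernel of a bounded operator, this set is closed.

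For the first bullet I would invoke the orthogonal decomposition theorem. Given closedness, $\hcurlg = \nabla_\Gamma \spht \oplus (\nabla_\Gamma \spht)^\perp$, and each $\bpsi$ splits uniquely into its $(\nabla_\Gamma \spht)^\perp$-part $P\bpsi$ and its $\nabla_\Gamma \spht$-part. Since $\bpsi + \bxi$ ranges, as $\bxi$ runs over $\nabla_\Gamma \spht$, over a set whose orthogonal projection onto $(\nabla_\Gamma \spht)^\perp$ is fixed at $P\bpsi$, the quotient norm $\norm{[\bpsi]}_{[\hcurlg]} = \inf_{\bxi \in \nabla_\Gamma \spht} \norm{\bpsi + \bxi}_\hcurlg$ is attained precisely when the $\nabla_\Gamma \spht$-component is cancelled, giving $\norm{[\bpsi]}_{[\hcurlg]} = \norm{P\bpsi}_\hcurlg$. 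Hence the map $(\nabla_\Gamma \spht)^\perp \ni \bpsi \mapsto [\bpsi]$ is a linear isometric bijection onto $[\hcurlg]$, which is the asserted isometric isomorphism.

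For the second bullet I would use that the dual of a quotient by a closed subspace is identified with the annihilator of that subspace, so that $([\hcurlg])^\prime$ is identified with $(\nabla_\Gamma \spht)^\circ$, the annihilator taken inside $(\hcurlg)^\prime$. Invoking the duality $(\hcurlg)^\prime \cong \hdivg$ recalled before the statement, with pairing $\duality{\cdot,\cdot}_\Gamma$, this annihilator consists of those $\bpsi \in \hdivg$ for which $\duality{\bpsi, \nabla_\Gamma w}_\Gamma = 0$ for all $w \in \spht$. By the defining relation \eqref{eq:diffOperator} of the surface divergence, $\duality{\bpsi, \nabla_\Gamma w}_\Gamma = -\duality{\Div_\Gamma \bpsi, w}_\Gamma$, so the condition is equivalent to $\Div_\Gamma \bpsi = 0$, and the annihilator is exactly $\hdivzg$. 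This yields the identification of the dual of $[\hcurlg]$ with $\hdivzg$.

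The main obstacle is the closedness of $\nabla_\Gamma \spht$: this is the only step where the topological hypotheses on $\Omega$ and $\Gamma$ genuinely enter, through the exactness of the surface de~Rham sequence, and without it neither the orthogonal-complement identification nor the clean description of the annihilator would be available. The remaining arguments are formal consequences of Hilbert-space geometry and the duality already established for the trace spaces.
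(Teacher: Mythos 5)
Your proof is correct and follows essentially the same route as the paper: the first bullet via the orthogonal decomposition theorem and the observation that the quotient norm is attained at the orthogonal projection, and the second via the identification of the dual of a quotient with the annihilator of $\nabla_\Gamma \spht$, combined with the duality $(\hcurlg)^\prime \cong \hdivg$ and the defining relation \eqref{eq:diffOperator} for $\Div_\Gamma$. The only notable difference is that you explicitly justify the closedness of $\nabla_\Gamma \spht$ through the exactness of the surface de~Rham complex under the stated topological hypotheses, a step the paper asserts without proof; this is a useful added detail rather than a genuinely different approach.
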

\begin{proof}
Let $\mathsf{P}_1: \hcurlg \rightarrow [\hcurlg]$ be a projection, such that for all $\bxi\in \hcurlg$, $\mathsf{P}_1\bxi = [\bxi] := \bxi + \nabla_\Gamma \spht$. Moreover, since $\nabla_\Gamma \spht $ is a closed linear subspace of $\hcurlg$, the projection theorem, see e.g. \cite[Theorem~2.9]{monk2003finite} states that the direct sum decomposition $\hcurlg = (\nabla_\Gamma \spht)^\perp \oplus \nabla_\Gamma \spht$, such that $(\nabla_\Gamma \spht)^\perp \cap \nabla_\Gamma \spht = \{0\}$ holds. Hence, for every $\bxi \in \hcurlg$, there exists unique $\bpsi \in (\nabla_\Gamma \spht)^\perp$ and $\bvphi \in \nabla_\Gamma \spht$, such that $\bxi = \bpsi + \bvphi$. Consequently, there exists a canonical isomorphism $\iota$ of $(\nabla_\Gamma \spht)^\perp$ onto $[\hcurlg]$, which means that the quotient space $[\hcurlg]$ can be identified with the orthogonal complement of $\nabla_\Gamma \spht$ in $\hcurlg$. Moreover, the composition of the projection $\mathsf{P}_1$ with the inverse isomorphism $\iota^{-1}$ corresponds to the orthogonal projection $\mathsf{P}:\hcurlg \rightarrow (\nabla_\Gamma \spht)^\perp$. Since $\ker \mathsf{P} = \nabla_\Gamma \spht $, it follows \[ \norm{[\mathsf{P}\bpsi]}_{[\hcurlg]} = \inf_{\bxi \in \nabla_\Gamma \spht }\{ \norm{\mathsf{P}\bpsi + \mathsf{P}\bxi}_\hcurlg \} =  \norm{\mathsf{P}\bpsi}_\hcurlg .\]
Therefore, $[\hcurlg] \cong (\nabla_\Gamma \spht)^\perp$. \\ 
The result of the second proposition follows from the universal property of the quotient, which gives us a tool to easily construct a natural isomorphism $\hdivzg \rightarrow [\hcurlg]$, since $\hdivg$ and $\hcurlg$ are dual, and $\nabla_\Gamma \spht = \ker \hcurlg$ holds for trivial topologies. The same result follows also directly by using the first proposition, and the first identity in \eqref{eq:diffOperator}. However, the first approach holds even for non-trivial topologies since it only requires $\nabla_\Gamma \spht \subseteq \ker \hcurlg$.
\end{proof}

By taking into account the commutativity of the de~Rham complex, see Figure \ref{fig:deRham}, the identity 
\begin{equation*}
\Div_\Gamma\twtrace \bw = \gamma_\bn \bcurl \bw
\end{equation*}  
holds for all $\bw \in \hcurlK$. In particular, by choosing $\bw = \bcurl \bv \in \hcurlcurlK$ that additionally satisfies \eqref{eq:intExtCurlCurlDomain}, we obtain  
\begin{equation*}
\Div_\Gamma\ntrace \bv = \gamma_\bn \bcurl \bcurl \bv= 0.
\end{equation*} 
Therefore, \eqref{eq:intBIE2} and \eqref{eq:extBIE2} have to be set in $\hdivzg$, which again can be identified with the dual space of $[\hcurlg]$.
In a weak sense, an equivalent weak Calder\'on projector can be obtained by using the identity 
  \begin{equation*}
  \duality{\bpsi, \nabla_\Gamma \V_0(\gamma_\bn \bu)}_\Gamma = -\duality{\Div_\Gamma \bpsi, \V_0(\gamma_\bn \bu)}_\Gamma = 0,  
  \end{equation*}
which holds whenever $\Div_\Gamma \bpsi = 0$. Hence, by testing \eqref{eq:intBIE1} and \eqref{eq:extBIE1} with $\bpsi \in \hdivzg$, we observe that the contribution of the last term in \eqref{eq:representationFormula} also vanishes. Thus, we obtain for $\alpha = 0$ 
  \begin{equation} \label{eq:bieintvar}
 \duality{\bpsi, \SL \ntrace \bu}_\Gamma - \duality{\bpsi , (\frac{1}{2} - \DL) \dtrace \bu}_\Gamma= 0,  
  \end{equation} 
  and for $\alpha = 1$
  \begin{equation} \label{eq:bieextvar}
 \duality{\bpsi, \SL \ntracex \bue}_\Gamma + \duality{\bpsi , (\frac{1}{2} + \DL) \dtracex \bue}_\Gamma= 0
  \end{equation} 
  for all $\bpsi \in \hdivzg$.
 Moreover, we know from \cite[Theorem~4.4~($\kappa = 0$)]{hiptmair2005coupled} that $\SL$ is symmetric and $\hdivzg$-elliptic, i.e., there exists $C_\SL > 0$ such that 
 \begin{equation}\label{eq:ellipticitySL}
 \duality{\bpsi, \SL \bpsi}_\Gamma \geq C_\SL \norm{\bpsi}_\hdivg^2 \qquad \text{ for all } \bpsi \in \hdivzg.
 \end{equation}
 Furthermore, due to 
  \begin{equation} \label{eq:ellipticityHS}
\duality{\bv, \HS \bv}_\Gamma = \duality{\V_0 \curl_\Gamma \bpsi,\curl_\Gamma \bpsi}_\Gamma \geq C_\HS \norm{\bpsi}^2_\hcurlg \qquad\text{ for all } \bpsi \in [\hcurlg]
 \end{equation}
with $C_\HS>0$, we establish symmetry and ellipticity of $\HS$ in $[\hcurlg]$. We used thereby \cite[Remark~4.26~($\kappa = 0$)]{engleder} and the well-known $\dht$-ellipticity and symmetry of $\V_0$. Note that $\HS$ is only positive semi-definite, if we consider the whole space $\hcurlg$. Moreover, it holds for $\bpsi \in \hdivg$ and $\bxi \in \hcurlg$ that
 \begin{equation}\label{eq:adjointDL}
 \duality{\bpsi, \DL \bxi}_\Gamma = -\duality{ \DLA \bpsi, \bxi}_\Gamma,
 \end{equation} 
cf. \cite[Theorem~3.9~($\kappa = 0$)]{hiptmairSymmetric}. \\

By using the $\hdivzg$-ellipticity of $\SL$, we define in a $\hdivzg$-sense the interior and exterior Steklov-Poincar\'e operators $\Steklov,\,\Steklove: \hcurlg \rightarrow \hdivzg$ starting from \eqref{eq:intBIE1} and \eqref{eq:extBIE1} by
 \begin{subequations}
\begin{align}
 \ntrace \bu & =\SL^{-1}(\frac{1}{2} - \DL) \dtrace \bu := \Steklov \dtrace \bu ,\label{eq:SteklovIntNonSym}\\
  \ntracex \bue &:=  -\SL^{-1}(\frac{1}{2} + \DL) \dtracex \bue := \Steklove \dtracex \bue.\label{eq:SteklovExtNonSym} 
\end{align}
 \end{subequations}
Moreover, inserting the above in \eqref{eq:intBIE2} and \eqref{eq:extBIE2} leads to a symmetric representation of the Steklov-Poincar\'e operators:
 \begin{subequations}
\begin{align}
 \Steklov &= \left( \frac{1}{2} + \DLA \right) \SL^{-1}\left(\frac{1}{2} - \DL\right) + \HS , \label{eq:steklovIntSymm}\\
  \Steklove &= -\left( \frac{1}{2} - \DLA \right) \SL^{-1}\left(\frac{1}{2} + \DL\right) - \HS. 
\end{align}
 \end{subequations}
With this, together with the $\hdivzg$-ellipticity of $\SL$, we see that the Steklov-Poincar\'e operators inherit the properties of the hyper-singular operator $\HS$. In particular, we notice that $\Steklov,\,\Steklove$ are only semi-definite, if we consider the entire space $\hcurlg$, since surface gradient fields lie in the kernel of $\HS$. However, ellipticity is recovered, when restricting the definition space to $[\hcurlg]$, by recovering the ellipticity of $\HS$ in $[\hcurlg]$.  \\
  
As a last preliminary step, we need to find a weak formulation for the interior part, represented by \eqref{eq:curlcurlInt}. This is merely achieved by applying the second Green's identity \eqref{eq:Green2}. However, due to the infinite-dimensional kernel of the $\bcurl$-operator, we observe that $\scalar{\M\bcurl\bu,\bcurl \bv }$ fails to be strongly monotone in the entire $\hcurl$ space. Indeed, a solution of the interior problem can only be unique up to a gradient field. Therefore, we consider a formulation of the interior problem in the quotient space $[\hcurl]:=\hcurl / \nabla\bh$, which is defined accordingly to $[\hcurlg]$. Thus, we equip $[\hcurl]$ with the quotient norm 
\begin{equation*}
\norm{[\bv]}_{[\hcurl]} := \inf_{\bw \in \nabla H^1(\Omega) }\{ \norm{\bv+ \bw}_\hcurl \}.
\end{equation*}
In the following, we give a characterization of the quotient space $[\hcurl]$.
\begin{proposition}\label{prop:characHcurl}
	Let $\Omega$ be a bounded Lipschitz domain, which is simply-connected. Then:
	\begin{itemize}
		\item The quotient space $[\hcurl]$ and the orthogonal complement of $\nabla H^1(\Omega)$ in $\hcurl$ are isometrically isomorphic. We denote $[\hcurl] \cong (\nabla H^1(\Omega))^\perp$.
		\item The space $[\hcurl]$ can be identified with $\hcurl \cap \hdivzh$, where
		\begin{equation*}
		\hdivzh := \{ \bv \in \hdiv: \Div \bv = 0 \text{ and } \gamma_\bn \bv = 0 \}.
		\end{equation*} 
	\end{itemize}
\end{proposition}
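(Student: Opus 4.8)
The plan is to mirror closely the proof of Proposition \ref{proposition:representative}, whose abstract structure is identical, and then to carry out the extra work specific to the $\hcurl$ setting for the second bullet. For the first statement I would begin by observing that $\nabla H^1(\Omega)$ is a closed linear subspace of $\hcurl$: gradients lie in $\hcurl$ because $\bcurl \nabla q = \boldsymbol{0}$, and for a simply-connected $\Omega$ the space $\nabla H^1(\Omega)$ coincides with the kernel of the continuous operator $\bcurl : \hcurl \rightarrow \bl$, hence is closed (alternatively, closedness follows from a Poincar\'e inequality on $H^1(\Omega)/\R$). With closedness in hand, the projection theorem \cite[Theorem~2.9]{monk2003finite} yields the orthogonal decomposition $\hcurl = (\nabla H^1(\Omega))^\perp \oplus \nabla H^1(\Omega)$, and the canonical map $\iota$ of $(\nabla H^1(\Omega))^\perp$ onto $[\hcurl]$ is an isomorphism. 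The isometry then follows verbatim as before: writing $\mathsf{P}$ for the orthogonal projection onto $(\nabla H^1(\Omega))^\perp$, one has $\norm{[\mathsf{P}\bv]}_{[\hcurl]} = \inf_{\bw \in \nabla H^1(\Omega)}\norm{\mathsf{P}\bv + \mathsf{P}\bw}_\hcurl = \norm{\mathsf{P}\bv}_\hcurl$, since $\ker \mathsf{P} = \nabla H^1(\Omega)$.

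For the second statement the key is to compute the orthogonal complement $(\nabla H^1(\Omega))^\perp$ explicitly and match it with $\hcurl \cap \hdivzh$. A field $\bv \in \hcurl$ lies in $(\nabla H^1(\Omega))^\perp$ exactly when $\scalarcurl{\bv,\nabla q} = 0$ for all $q \in H^1(\Omega)$; since $\bcurl \nabla q = \boldsymbol{0}$, the $\hcurl$ inner product collapses to the $\bl$-pairing, so the condition reduces to $\scalar{\bv,\nabla q} = 0$ for all $q \in H^1(\Omega)$. Testing first with $q \in C_0^\infty(\Omega)$ shows $\Div \bv = 0$ in the distributional sense, which in particular places $\bv$ in $\hdiv$. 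Once $\bv \in \hdiv$ with $\Div \bv = 0$, I would apply the Green's formula $\scalar{\bv,\nabla q} = -\scalar{\Div \bv, q} + \duality{\gamma_\bn \bv, \gamma_0 q}_\Gamma$, so the orthogonality becomes $\duality{\gamma_\bn \bv, \gamma_0 q}_\Gamma = 0$ for all $q \in H^1(\Omega)$. Exploiting the surjectivity of $\gamma_0 : H^1(\Omega) \rightarrow \spht$ then forces $\gamma_\bn \bv = \boldsymbol{0}$, so $\bv \in \hcurl \cap \hdivzh$. The reverse inclusion is immediate, as the same Green's formula makes $\scalar{\bv,\nabla q}$ vanish for every $q \in H^1(\Omega)$ once $\Div \bv = 0$ and $\gamma_\bn \bv = \boldsymbol{0}$. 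Combining both inclusions with the first bullet yields $[\hcurl] \cong \hcurl \cap \hdivzh$.

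I expect the main obstacle to be the regularity bookkeeping in the second part: the orthogonality condition is a priori merely an $\bl$-statement, so one must first extract $\Div \bv = 0$ distributionally in order to legitimately regard $\bv$ as an element of $\hdiv$ and give meaning to the trace $\gamma_\bn \bv$, before invoking the boundary pairing. Establishing closedness of $\nabla H^1(\Omega)$ cleanly, where simple-connectedness enters through the identification with $\ker \bcurl$, is the other point deserving care, though it is standard.
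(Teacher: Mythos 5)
Your proposal is correct and follows essentially the same route as the paper: the projection theorem applied to the closed subspace $\nabla H^1(\Omega)$ for the first bullet, and for the second the reduction of $\hcurl$-orthogonality to $\bl$-orthogonality followed by the Green's identity $\scalar{\bw,\nabla \varphi} = \duality{\gamma_\bn \bw, \gamma_0 \varphi}_\Gamma - \scalar{\Div \bw , \varphi}$ to extract $\Div \bv = 0$ and $\gamma_\bn \bv = 0$. In fact you are somewhat more careful than the paper, which invokes that Green's identity for $\bw \in \hdiv$ without first justifying (as you do, by testing against $C_0^\infty(\Omega)$ functions) that an element of the orthogonal complement has distributional divergence zero and hence lies in $\hdiv$ so that $\gamma_\bn$ is meaningful.
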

\begin{proof}
Using similar arguments as in the first result of Proposition \ref{proposition:representative}, it follows that every element of $[\hcurl]$ can be uniquely represented by a representative $\bv^\perp \in \hcurl$ that lies in the orthogonal complement of $\nabla H^1(\Omega)$ in $\hcurl$. This can be obtained by means of the orthogonal projection $\mathsf{P}: \hcurl \rightarrow (\nabla H^1(\Omega))^\perp$, which can be defined analogously to the one given in the proof of Proposition \ref{proposition:representative}. Hence, $[\hcurl] \cong (\nabla H^1(\Omega))^\perp$ can be established, and $\norm{[\mathsf{P}\bv]}_{[\hcurl]} \cong \norm{\mathsf{P}\bv}_\hcurl$ holds for $\mathsf{P}\bv \in [\hcurl]$.\\
For the second assertion, we first characterize the orthogonal complement of $\nabla H^1(\Omega)$ in $\bl$. For this, we use the Green's identity \cite[Theorem~3.24]{monk2003finite}
\begin{equation*}
	\bw \in \hdiv, \, \varphi \in H^1(\Omega):\quad \scalar{\bw,\nabla \varphi} = \duality{\gamma_\bn \bw, \gamma_0 \varphi}_\Gamma - \scalar{\Div \bw , \varphi}.
\end{equation*}   
Hence, for $\bw$ to satisfy orthogonality in the $\bl$ sense for all $\varphi \in H^1(\Omega)$, we require $\Div \bw = 0$ and $\gamma_\bn \bw = 0$ to hold. Thus, $\bw \in \hdivzh$. Therefore, it follows that $(\nabla H^1(\Omega))^\perp$ in $\hcurl$ can be identified with $\hcurl \cap \hdivzh$, and by the first assertion, the same holds for $[\hcurl]$. 
\end{proof}
\begin{remark}\label{remark:notation}
For the sake of notational simplicity, we will be considering representatives of the equivalence classes as discussed above but we will keep the notation of elements of the quotient spaces. Moreover, every $\bv \in [\hcurl]$ and $\bpsi \in [\hcurlg]$ will be implicitly identified with a $\mathsf{P\bw} \in (\nabla H^1(\Omega))^\perp,\,\bw \in \hcurl$, and $\mathsf{P\bxi} \in (\nabla_\Gamma \spht)^\perp,\,\bxi \in \hcurlg$, respectively.
\end{remark}
Note that due to the properties of the trace operators, the definitions given in this section can be adapted accordingly to the new setting. For example, it holds that
 \begin{equation*}
 \dtrace: [\hcurl] \rightarrow [\hcurlg]
 \end{equation*}
is linear, continuous, and surjective. Moreover, due to the Green's identity \eqref{eq:Green2}, it holds that $\duality{\ntrace \bu, \dtrace \bv}_\Gamma$ defines a non-degenerate duality pairing on $\hdivzg \times [\hcurlg]$. Hence, the skew-adjoint relation between $\DL$ and $\DLA$ in \eqref{eq:adjointDL} is inherited in $\hdivzg \times [\hcurlg]$.
\section{non-symmetric FEM-BEM coupling} \label{sec:fem-bem}
In this next section, we give a non-symmetric FEM-BEM coupling, and show well-posedness of the arising problem, along with a stability result for a special type of practice-oriented non-linearity. \\
Let us consider Problem \ref{prob:strongForm}. Testing \eqref{eq:curlcurlInt} with $\bv \in [\hcurl]$, applying the Green's identity \eqref{eq:Green2}, and inserting \eqref{eq:NeumannJump} leads to
\begin{equation}\label{eq:FEMpart}
 \scalar{\M\bcurl\bu,\bcurl \bv }- \duality{\ntracex \bue, \dtrace \bv}_\Gamma = \scalar{\boldsymbol{f} ,\bv} + \duality{\boldsymbol{\phi}_0, \dtrace \bv}_\Gamma. 
\end{equation}
Inserting \eqref{eq:DirichletJump} in \eqref{eq:bieextvar} yields together with \eqref{eq:FEMpart} to the following non-symmetric coupling:
\begin{problem}\label{problem:varProb}
 Find $\uvec: = (\bu,\bphi:= \ntracex \bue) \in \bH := [\hcurl] \times \hdivzg$ such that 
\begin{align*}
 \scalar{\M\bcurl\bu,\bcurl \bv }- \duality{\bphi, \dtrace \bv}_\Gamma &= \scalar{\boldsymbol{f} ,\bv} + \duality{\boldsymbol{\phi}_0, \dtrace \bv}_\Gamma, \\ 
 \duality{\bpsi, \SL \bphi}_\Gamma + \duality{\bpsi , (\frac{1}{2} + \DL) \dtrace \bu}_\Gamma &= \duality{\bpsi , (\frac{1}{2} + \DL) \boldsymbol{u}_0}_\Gamma
\end{align*}  
for all $\vvec:= (\bv,\bpsi) \in \bH$, and $\left( \boldsymbol{f},\bphi_0, \bu_0  \right) \in \left([\hcurl]^\prime  \times\hdivzg \times [\hcurlg] \right)$.
\end{problem}
\noindent Thereby, $[\hcurl]^\prime$ denotes the dual space of $[\hcurl]$. A concrete characterization can be reached by means of the inner product of $[\hcurl]$. Thus, $[\hcurl]^\prime \cong [\hcurl]$, i.e., we identify the dual of $[\hcurl]$ with the space itself. This leads by Proposition \ref{prop:characHcurl} to the consistency condition
	\begin{equation*}
		\Div \boldsymbol{f} = 0,\quad \gamma_\bn \boldsymbol{f} = 0.
	\end{equation*}
Moreover, recall that $\hdivzg$ and $[\hcurlg]$ define a non-degenerate duality pairing. For notational purposes, we refer again to Remark \eqref{remark:notation}. \\
For convenience, we equip the product space $\bH:= [\hcurl] \times \hdivzg $ with the following norm
\begin{equation}\label{eq:normH}
\norm{\vvec}_\bH := \left( \norm{\bv}_{\hcurl}^2 + \norm{\bpsi}^2_\hdivg \right)^{\frac{1}{2}} \qquad \text{ for } \vvec := (\bv,\bpsi) \in  \bH,
\end{equation} 
and write the variational formulation derived above in a compact form: 
\begin{problem}\label{problem:magnetostaticVF} Find $\uvec: = (\bu,\bphi) \in \bH$ such that 
$\ba (\uvec , \vvec) = \bell (\vvec)$ for all $\vvec: = (\bv,\bpsi) \in \bH$. Thereby, 
\begin{equation*}
\ba (\uvec , \vvec) := \scalar{\M\bcurl\bu,\bcurl \bv }- \duality{\bphi, \dtrace \bv}_\Gamma + \duality{\bpsi, \SL \bphi}_\Gamma + \duality{\bpsi , (\frac{1}{2} + \DL) \dtrace \bu}_\Gamma
\end{equation*}
is a linear form (linear in the second argument), and 
\begin{equation*}
\bell(\vvec) := \scalar{\boldsymbol{f} ,\bv} + \duality{\boldsymbol{\phi}_0, \dtrace \bv}_\Gamma + \duality{\bpsi , (\frac{1}{2} + \DL) \boldsymbol{u}_0}_\Gamma
\end{equation*}
is a linear functional.
\end{problem} 
In the subsequent analysis, we use standard results for strongly monotone operators \cite{zeidler}. For this, let $\Astab: \bH \rightarrow \bH^\prime$ be the induced non-linear operator, with $\bH^\prime$ denoting the dual space of $\bH$, which arises from
\begin{equation}\label{eq:nonLinearOp}
\duality{\Astab (\uvec), \vvec} = \ba(\uvec,\vvec) \qquad \forall \uvec,\vvec \in \bH.
\end{equation}
First, we introduce the following contraction based result, which will be used in the monotonicity estimate.
\begin{lemma}\label{theorem:contraction}
Let $\Steklov:[\hcurlg] \rightarrow \hdivzg$ be an interior Steklov-Poincar\'e operator. There exists a contraction constant $C_\DL := \frac{1}{2}+\sqrt{\frac{1}{4}-C_\SL C_\HS} \in [\frac{1}{2},1)$ such that 
\begin{equation*}
(1-C_\DL) \norm{\bpsi }_{\SL^{-1}} \leq \norm{\left(\frac{1}{2} - \DL \right) \bpsi }_{\SL^{-1}} \leq C_\DL \norm{\bpsi }_{\SL^{-1}},  \quad \bpsi \in [\hcurlg],
\end{equation*}
and
\begin{equation*}
\norm{\left(\frac{1}{2} - \DL \right) \bpsi }_{\SL^{-1}}^2 \leq C_\DL \duality{\Steklov \bpsi,\bpsi }_\Gamma, \quad \bpsi \in [\hcurlg].
\end{equation*}
Thereby, $C_\SL$ and $C_\HS$ are the ellipticity constants given in \eqref{eq:ellipticitySL} and \eqref{eq:ellipticityHS}, respectively, and $\norm{\cdot}_{\SL^{-1}} = \duality{\SL^{-1} \cdot,\cdot}_\Gamma^\frac{1}{2}$.  
\end{lemma}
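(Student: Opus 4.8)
The plan is to treat this as the vector-valued counterpart of the classical Steinbach--Wendland contraction estimate for the double-layer operator, exploiting the two representations of the Steklov--Poincar\'e operator together with the ellipticity of $\SL$ and $\HS$. The whole argument lives in the energy inner product $\duality{\SL^{-1}\cdot,\cdot}_\Gamma$ induced by the symmetric, $\hdivzg$-elliptic single-layer operator, with all quantities understood via representatives in $[\hcurlg]$ in the sense of Remark~\ref{remark:notation}. The central object to control is $\norm{\DL\bpsi}_{\SL^{-1}}$, and the goal of the first part is the explicit identity
\begin{equation*}
\norm{\DL\bpsi}_{\SL^{-1}}^2 = \tfrac14\norm{\bpsi}_{\SL^{-1}}^2 - \duality{\HS\bpsi,\bpsi}_\Gamma, \qquad \bpsi\in[\hcurlg],
\end{equation*}
from which both displayed estimates follow by elementary arguments.

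To reach this identity I would first combine the non-symmetric form \eqref{eq:SteklovIntNonSym} with the symmetric form \eqref{eq:steklovIntSymm} of $\Steklov$ and use the adjoint relation \eqref{eq:adjointDL} to rewrite the cross term as an $\SL^{-1}$-norm, yielding
\begin{equation*}
\duality{\Steklov\bpsi,\bpsi}_\Gamma = \norm{(\tfrac12-\DL)\bpsi}_{\SL^{-1}}^2 + \duality{\HS\bpsi,\bpsi}_\Gamma.
\end{equation*}
The analogous manipulation on $\Steklove$ produces the companion identity with $(\tfrac12+\DL)$ in place of $(\tfrac12-\DL)$. Adding the two and invoking the Calder\'on relation $\Steklov-\Steklove=\SL^{-1}$, obtained by combining the non-symmetric forms \eqref{eq:SteklovIntNonSym} and \eqref{eq:SteklovExtNonSym}, gives
\begin{equation*}
\norm{\bpsi}_{\SL^{-1}}^2 = \norm{(\tfrac12-\DL)\bpsi}_{\SL^{-1}}^2 + \norm{(\tfrac12+\DL)\bpsi}_{\SL^{-1}}^2 + 2\duality{\HS\bpsi,\bpsi}_\Gamma;
\end{equation*}
applying the parallelogram law in the $\SL^{-1}$-inner product to the first two terms isolates $\norm{\DL\bpsi}_{\SL^{-1}}^2$ and delivers the identity above.

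The second ingredient is a comparison of the two energies. Using the $\hdivzg$-ellipticity \eqref{eq:ellipticitySL} of $\SL$ together with a Cauchy--Schwarz step in the duality pairing on $\hdivzg\times[\hcurlg]$, I would show $\norm{\bpsi}_{\SL^{-1}}^2\le C_\SL^{-1}\norm{\bpsi}_\hcurlg^2$, which combined with the $[\hcurlg]$-ellipticity \eqref{eq:ellipticityHS} of $\HS$ gives $\duality{\HS\bpsi,\bpsi}_\Gamma\ge C_\SL C_\HS\norm{\bpsi}_{\SL^{-1}}^2$. Feeding this into the identity yields $\norm{\DL\bpsi}_{\SL^{-1}}\le\sqrt{\tfrac14-C_\SL C_\HS}\,\norm{\bpsi}_{\SL^{-1}}$; nonnegativity of the left-hand side of the identity also forces $C_\SL C_\HS\le\tfrac14$, so $C_\DL$ is well defined and lies in $[\tfrac12,1)$. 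The upper and lower bounds in the first display then follow from the triangle and reverse triangle inequalities applied to $\tfrac12-\DL$, using $1-C_\DL=\tfrac12-\sqrt{\tfrac14-C_\SL C_\HS}$. For the last display I would start from the interior energy identity, note $\duality{\HS\bpsi,\bpsi}_\Gamma\ge0$, and reduce the claim to $(1-C_\DL)\norm{(\tfrac12-\DL)\bpsi}_{\SL^{-1}}^2\le C_\DL\duality{\HS\bpsi,\bpsi}_\Gamma$; the already established upper contraction bound together with the algebraic identity $(1-C_\DL)C_\DL=C_\SL C_\HS$ closes the estimate.

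I expect the main obstacle to be the bookkeeping in the two energy identities: one must apply the adjoint relation \eqref{eq:adjointDL} with the correct sign and keep careful track of which arguments live in $\hdivzg$ and which in $[\hcurlg]$, since $\SL$, $\DL$, $\DLA$ and $\HS$ act between different trace spaces and the pairing $\duality{\cdot,\cdot}_\Gamma$ is only non-degenerate on $\hdivzg\times[\hcurlg]$. A secondary point requiring care is justifying the parallelogram law and the comparison $\norm{\cdot}_{\SL^{-1}}\le C_\SL^{-1/2}\norm{\cdot}_\hcurlg$ at the level of the quotient space, which is precisely where Remark~\ref{remark:notation} and the identification of $\hdivzg$ with the dual of $[\hcurlg]$ enter.
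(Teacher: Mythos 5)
Your proposal is correct, but it reaches the core contraction estimate by a genuinely different route than the paper. The paper follows the scalar Steinbach--Wendland argument: it establishes only the interior energy identity $\duality{\Steklov \bpsi,\bpsi}_\Gamma = \norm{(\tfrac12-\DL)\bpsi}_{\SL^{-1}}^2 + \duality{\HS\bpsi,\bpsi}_\Gamma$, bounds $\duality{\Steklov\bpsi,\bpsi}_\Gamma \leq \norm{(\tfrac12-\DL)\bpsi}_{\SL^{-1}}\norm{\bpsi}_{\SL^{-1}}$ by a Cauchy--Schwarz step implemented through the self-adjoint square root $\SL^{\frac12}$ (invoking Rudin's spectral result), inserts $\duality{\HS\bpsi,\bpsi}_\Gamma \geq C_\SL C_\HS\norm{\bpsi}_{\SL^{-1}}^2$, and then solves the resulting quadratic inequality, which delivers the upper and lower contraction bounds simultaneously. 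You instead derive both the interior and the exterior energy identities, add them through the Calder\'on-type relation $\Steklov - \Steklove = \SL^{-1}$ (an immediate consequence of \eqref{eq:SteklovIntNonSym} and \eqref{eq:SteklovExtNonSym}), and apply the parallelogram law in the $\SL^{-1}$-inner product to obtain the exact identity $\norm{\DL\bpsi}_{\SL^{-1}}^2 = \tfrac14\norm{\bpsi}_{\SL^{-1}}^2 - \duality{\HS\bpsi,\bpsi}_\Gamma$, from which the two bounds of the first display follow by the triangle and reverse triangle inequalities. Your route buys two things: it dispenses with the operator square root entirely (an elementary Cauchy--Schwarz in the $\SL^{-1}$-inner product would also have sufficed for the paper's step, but your argument needs neither), and the nonnegativity of the left-hand side of your identity proves $C_\SL C_\HS \leq \tfrac14$, i.e., that $C_\DL$ is well defined and real --- a point the paper leaves implicit. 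The price is that you must introduce the exterior Steklov--Poincar\'e operator with its symmetric representation and track the signs in the adjoint relation \eqref{eq:adjointDL} twice, whereas the paper never leaves the interior operator. Both proofs share the same backbone (the interior identity, the comparison $C_\SL\norm{\bpsi}_{\SL^{-1}}^2 \leq \norm{\bpsi}_\hcurlg^2$ obtained by duality and ellipticity of $\SL$, and the ellipticity of $\HS$), and for the second display your argument coincides in substance with the paper's: the interior identity, the bound $\duality{\HS\bpsi,\bpsi}_\Gamma \geq C_\SL C_\HS\norm{\bpsi}_{\SL^{-1}}^2$ combined with the already proven upper contraction estimate, and the algebraic identity $C_\DL(1-C_\DL) = C_\SL C_\HS$.
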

\begin{proof}
The first assertion follows analogously to the scalar case in \cite{SteinbachContraction}. For convenience, we sketch the main steps of the proof. Using the symmetric representation of the interior Steklov-Poincar\'e operator, given in \eqref{eq:steklovIntSymm}, together with the $\hdivzg$-ellipticity of $\SL$, it holds 
\begin{equation}\label{eq:term}
\begin{split}
\norm{\left(\frac{1}{2} - \DL \right) \bpsi }_{\SL^{-1}}^2 &= \duality{\SL^{-1}\left(\frac{1}{2} - \DL \right) \bpsi ,\left(\frac{1}{2} - \DL \right) \bpsi}_\Gamma \\
&= \duality{\Steklov \bpsi,\bpsi}_\Gamma - \duality{\HS \bpsi,\bpsi}_\Gamma. 
\end{split}
\end{equation}
Using the same argumentation as in \cite[Proposition~5.4]{SteinbachContraction}, we rely on \cite[Theorem~12.33]{rudin}, which states that there exists a self-adjoint square root $\SL^{\frac{1}{2}}$ of $\SL$, which is also invertible, since $\SL$ is a self-adjoint and invertible operator in $\hdivzg$. We denote the inverse of the square root operator by $\SL^{-\frac{1}{2}} $. As a consequence, it follows that $\SL^{-\frac{1}{2}} = \SL^{\frac{1}{2}} \SL^{-1}$ and $\norm{\SL^{-\frac{1}{2}} \bv}_{\boldsymbol{L}^2(\Gamma)} = \norm{\bv}_{\SL^{-1}}$. With this, and following the steps of \cite[Theorem~5.1]{SteinbachContraction}, we get for the first term of the right-hand side of \eqref{eq:term}
\begin{equation}\label{eq:firstTerm}
\begin{split}
\duality{\Steklov \bpsi,\bpsi}_\Gamma &= \duality{\SL^{-\frac{1}{2}} \SL \Steklov \bpsi, \SL^{-\frac{1}{2}} \bpsi}_\Gamma \\
 &\leq \norm{\left(\frac{1}{2} - \DL \right) \bpsi }_{\SL^{-1}} \norm{\bpsi}_{\SL^{-1}},
\end{split}
\end{equation}
where we used the first identity of $\Steklov$ given in \eqref{eq:SteklovIntNonSym}.
For the second term, we need the following result 
\begin{equation*}
C_\SL \duality{\SL^{-1} \bpsi,\bpsi}_\Gamma \leq \norm{\bpsi}_{\hcurlg}^2,
\end{equation*}
which can be derived as in \cite[Proposition~5.2]{SteinbachContraction} by a duality argument, and by making use of the bijectivity of $\SL$ in $\hdivzg$. Then, it follows from the $[\hcurlg]$-ellipticity of $\HS$ \eqref{eq:ellipticityHS} that
\begin{equation}\label{eq:secondTerm}
\begin{split}
\duality{\HS \bpsi,\bpsi}_\Gamma &\geq C_\HS C_\SL \duality{\SL^{-1} \bpsi,\bpsi}_\Gamma \\ &=  C_\HS C_\SL \norm{\bpsi}_{\SL^{-1}}^2.
\end{split}
\end{equation}
Inserting \eqref{eq:firstTerm} and \eqref{eq:secondTerm} in \eqref{eq:term} yields 
\begin{equation*}
\norm{\left(\frac{1}{2} - \DL \right) \bpsi }_{\SL^{-1}}^2  \leq\norm{\left(\frac{1}{2} - \DL \right) \bpsi }_{\SL^{-1}} \norm{\bpsi}_{\SL^{-1}} -C_\HS C_\SL \norm{\bpsi}_{\SL^{-1}}^2.
\end{equation*}
Therefore, solving the second order inequality leads to the assertion
\begin{equation*}
(1-C_\DL) \norm{\bpsi}_{\SL^{-1}} \leq \norm{\left(\frac{1}{2} - \DL \right) \bpsi }_{\SL^{-1}}  \leq C_\DL \norm{\bpsi}_{\SL^{-1}}
\end{equation*}
with $C_\DL := \frac{1}{2}+\sqrt{\frac{1}{4}-C_\SL C_\HS}$. Moreover, it  follows clearly that $C_\DL$ lies in $[\frac{1}{2},1)$. \\
The second assertion follows similarly to \cite[Lemma~2.1]{of2014ellipticity}. Equation \eqref{eq:term} reads 
\begin{equation}
 \duality{\Steklov \bpsi,\bpsi}_\Gamma  = \norm{\left(\frac{1}{2} - \DL \right) \bpsi }_{\SL^{-1}}^2 + \duality{\HS \bpsi,\bpsi}_\Gamma . 
\end{equation}
The second term of the right-hand side can be estimated by \eqref{eq:secondTerm} and the first assertion. We arrive at 
\begin{equation*}
\duality{\HS \bpsi,\bpsi}_\Gamma \geq \frac{C_\SL C_\HS}{C_\DL^2}\norm{\left(\frac{1}{2} - \DL \right) \bpsi }_{\SL^{-1}}^2.
\end{equation*}
By observing that $ C_\SL C_\HS = C_\DL (1-C_\DL)$, the assertion follows merely.
\end{proof}%
\noindent Then, we establish Lipschitz continuity and strong monotonicity of the non-linear operator $\Astab$. 
\begin{theorem}[Strong monotonicity]\label{theorem:monotonicity}
Let $\Astab: \bH \rightarrow \bH^\prime$ be defined as in \eqref{eq:nonLinearOp} with $\bH:= [\hcurl] \times \hdivzg $. Then,
\begin{itemize}
\item $\Astab$ is Lipschitz continuous, i.e., there exists a constant $C_\mathrm{L}>0$ such that 
\begin{equation*}
\norm{\Astab(\uvec) - \Astab(\vvec)}_{\bH^\prime} \leq C_\mathrm{L} \norm{\uvec - \vvec}_\bH
\end{equation*}
\item if $C_\mathrm{M}^\M > \frac{1}{4} C_\DL$, then $\Astab$ is strongly monotone, i.e., there exists a constant $C_\mathrm{E}>0$ such that 
\begin{equation*}
\duality{\Astab(\uvec) - \Astab(\vvec),\uvec - \vvec} \geq C_\mathrm{E} \norm{\uvec - \vvec}_\bH^2
\end{equation*}
\end{itemize}
for all $\uvec,\vvec \in \bH$.
\end{theorem}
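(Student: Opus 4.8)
The plan is to establish the two properties independently, exploiting that $\ba$ splits into the reluctivity (FEM) contribution $\scalar{\M\bcurl\bu,\bcurl\bv}$ and the boundary (BEM) contributions governed by $\SL$ and $\DL$. Throughout I write $\uvec=(\bu_1,\bphi_1)$, $\vvec=(\bu_2,\bphi_2)$ and set $\bu:=\bu_1-\bu_2$, $\bphi:=\bphi_1-\bphi_2$.

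For Lipschitz continuity I would test $\Astab(\uvec)-\Astab(\vvec)$ against an arbitrary $\wvec=(\bw,\bxi)\in\bH$ and estimate the four resulting terms by Cauchy--Schwarz. The volume term is handled by the integrated form of \ref{A1}, giving $\lnorm{\M\bcurl\bu_1-\M\bcurl\bu_2}\le C^\M_\mathrm{L}\lnorm{\bcurl\bu}$, while the three boundary terms are bounded through the continuity of the trace $\dtrace\colon[\hcurl]\to[\hcurlg]$ together with the boundedness of $\SL$ and $\DL$ and the $\hdivzg\times[\hcurlg]$ duality. Summation yields $\duality{\Astab(\uvec)-\Astab(\vvec),\wvec}\le C_\mathrm{L}\norm{\uvec-\vvec}_\bH\norm{\wvec}_\bH$, and the claim follows by passing to the supremum over $\norm{\wvec}_\bH=1$; this part is routine.

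The substantial part is strong monotonicity, which I would obtain by testing the difference against $\uvec-\vvec$ itself. The decisive algebraic step is that the two off-diagonal couplings partially cancel, $-\duality{\bphi,\dtrace\bu}_\Gamma+\duality{\bphi,(\tfrac12+\DL)\dtrace\bu}_\Gamma=-\duality{\bphi,(\tfrac12-\DL)\dtrace\bu}_\Gamma$, so that
\begin{equation*}
\duality{\Astab(\uvec)-\Astab(\vvec),\uvec-\vvec}=\scalar{\M\bcurl\bu_1-\M\bcurl\bu_2,\bcurl\bu}+\duality{\bphi,\SL\bphi}_\Gamma-\duality{\bphi,(\tfrac12-\DL)\dtrace\bu}_\Gamma.
\end{equation*}
The first summand is bounded below by $C^\M_\mathrm{M}\lnormq{\bcurl\bu}$ using \ref{A2}, and the second is the nonnegative $\SL$-energy by \eqref{eq:ellipticitySL}.

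The main obstacle is to control the indefinite cross term so that the admissibility threshold comes out as exactly $C^\M_\mathrm{M}>\tfrac14 C_\DL$. Here I would use the generalized Cauchy--Schwarz inequality in the $\SL$-inner product, $|\duality{\bphi,(\tfrac12-\DL)\dtrace\bu}_\Gamma|\le\duality{\bphi,\SL\bphi}_\Gamma^{1/2}\,\norm{(\tfrac12-\DL)\dtrace\bu}_{\SL^{-1}}$, then invoke the second assertion of Lemma~\ref{theorem:contraction} to obtain $\norm{(\tfrac12-\DL)\dtrace\bu}_{\SL^{-1}}^2\le C_\DL\,\duality{\Steklov\dtrace\bu,\dtrace\bu}_\Gamma$. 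The Steklov energy is dominated by the curl energy of any field sharing the same Dirichlet trace, in particular of $\bu$ itself, since its representative in $[\hcurl]\cong\hcurl\cap\hdivzh$ is divergence-free; by the energy-minimization property of the interior Steklov--Poincar\'e operator this gives $\duality{\Steklov\dtrace\bu,\dtrace\bu}_\Gamma\le\lnormq{\bcurl\bu}$. A weighted Young inequality with parameter $\delta\in(0,2)$ then absorbs the cross term and leaves the lower bound $\big(C^\M_\mathrm{M}-\tfrac{C_\DL}{2\delta}\big)\lnormq{\bcurl\bu}+\big(1-\tfrac\delta2\big)\duality{\bphi,\SL\bphi}_\Gamma$; letting $\delta\uparrow2$ shows that both coefficients stay strictly positive precisely when $C^\M_\mathrm{M}>\tfrac14 C_\DL$. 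Finally I would pass to the full $\bH$-norm by invoking a Poincar\'e--Friedrichs inequality on $\hcurl\cap\hdivzh$ (so that $\lnormq{\bcurl\bu}\ge c\,\norm{\bu}_\hcurl^2$, valid since $\Omega$ is simply-connected with connected boundary) and the ellipticity \eqref{eq:ellipticitySL} of $\SL$ on $\hdivzg$ (so that $\duality{\bphi,\SL\bphi}_\Gamma\ge C_\SL\norm{\bphi}_\hdivg^2$), which produces the strong-monotonicity constant $C_\mathrm{E}>0$.
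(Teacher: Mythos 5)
Your proof is correct and follows essentially the same route as the paper's: the same cancellation of the off-diagonal couplings to $-\duality{\bphi,(\frac{1}{2}-\DL)\dtrace\bu}_\Gamma$, the same Cauchy--Schwarz estimate in the $\SL$-inner product combined with the second assertion of Lemma~\ref{theorem:contraction}, and the same final norm equivalences (Friedrichs' inequality on $\hcurl\cap\hdivzh$ and the $\hdivzg$-ellipticity of $\SL$). The only presentational differences are that the paper establishes your Steklov energy bound $\duality{\Steklov\dtrace\bu,\dtrace\bu}_\Gamma\le\lnormq{\bcurl\bu}$ explicitly via the orthogonal splitting $\bu=\bu_\Gamma+\widetilde{\bu}$ rather than citing the energy-minimization property, and it packages your weighted Young inequality as positive definiteness of a $2\times 2$ quadratic form --- both computations yield the identical threshold $C_\mathrm{M}^\M>\frac{1}{4}C_\DL$.
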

\begin{proof}
The Lipschitz continuity of $\Astab$ follows from the Lipschitz continuity of the operator $\M$, and the continuity of the boundary integral operators. \\
The second assertion follows the same appraoch as in \cite[Lemma~2.5]{EEK20}. Let $\wvec=(\bw,\bxi) :=\uvec - \vvec \in \bH$. Then, 
\begin{equation*}
\duality{\Astab(\uvec) - \Astab(\vvec),\wvec} =  \underbrace{\scalar{\M\bcurl\bu - \M\bcurl \bv, \bw }}_{(I)} - \underbrace{\duality{\bxi , (\frac{1}{2} - \DL) \dtrace \bw}_\Gamma}_{(II)} + \duality{\bxi, \SL \bxi}_\Gamma.
\end{equation*}
First, by assuming the strong monotonicity of $\M$, Assumption \ref{A2} reads 
\begin{equation*}
\scalar{\M\bcurl\bu - \M\bcurl \bv, \bw } \geq C_M^\M \norm{\bcurl \bw}_\bl^2,
\end{equation*}
and holds for all $\bu,\bv \in \hcurl$. Hence, it also remains true for the representatives in $[\hcurl]$.
Next, we consider for a representative $\bw \in [\hcurl]$ the splitting $\bw = \bw_\Gamma + \widetilde{\bw}$, where $\widetilde{\bw} \in [\hzgcurl]:= \{ \bv \in [\hcurl]: \dtrace \bv = \boldsymbol{0} \text{ on } \Gamma \}$, and $\bw_\Gamma$ is the solution of the following boundary value problem (BVP) 
\begin{align*}
\bcurl \bcurl \bw_\Gamma &= \boldsymbol{0} &\text{in } \Omega,\\
\dtrace \bw_\Gamma &= \dtrace \bw &\text{on } \Gamma.  
\end{align*} 
Applying \eqref{eq:Green2} (with $\M = \Id$) to the BVP leads to  
\begin{equation*}
 \scalar{\bcurl\bw_\Gamma,\bcurl \bw_\Gamma } = \duality{\ntrace \bw_\Gamma, \dtrace \bw_\Gamma}_\Gamma = \duality{\Steklov \dtrace \bw_\Gamma, \dtrace \bw_\Gamma}_\Gamma,
\end{equation*}
where $\Steklov$ is the interior Steklov-Poincar\'e operator. Since by construction $\scalar{\bcurl\bw_\Gamma,\bcurl\widetilde{\bw}} = 0$ for all $\widetilde{\bw} \in [\hzgcurl]$, we have
\begin{equation} \label{eq:splitting}
\begin{split}
\norm{\bcurl\bw}_\bl^2 &= \norm{\bcurl\widetilde{ \bw}}_\bl^2 + \norm{\bcurl \bw_\Gamma}_\bl^2 \\
& = \norm{\bcurl\widetilde{ \bw}}_\bl^2 + \duality{\Steklov \dtrace \bw_\Gamma, \dtrace \bw_\Gamma}_\Gamma.
\end{split}
\end{equation} 
Hence, $(I)$ can be estimated by
\begin{equation*}
\scalar{\M\bcurl\bu - \M\bcurl \bv, \bw } \geq C_M^\M \norm{\bcurl\widetilde{ \bw}}_\bl^2 + C_M^\M \duality{\Steklov \dtrace \bw_\Gamma, \dtrace \bw_\Gamma}_\Gamma.
\end{equation*}
To estimate $(II)$, we have 
\begin{equation*}
\begin{split}
\duality{\bxi , (\frac{1}{2} - \DL) \dtrace \bw}_\Gamma &= \duality{\SL\bxi , \SL^{-1}(\frac{1}{2} - \DL) \dtrace \bw}_\Gamma\\
& \leq \norm{\bxi}_\SL \norm{(\frac{1}{2} - \DL) \dtrace \bw}_{\SL^{-1}} \qquad \text{ with } \norm{\bxi}_\SL=<\bxi,\SL\bxi> \\
& \leq \norm{\bxi}_\SL \sqrt{C_\DL \duality{\Steklov \dtrace \bw_\Gamma, \dtrace \bw_\Gamma}_\Gamma},
\end{split}
\end{equation*}
where we used Lemma \ref{theorem:contraction} in the last step. Altogether,
\begin{multline*}
\duality{\Astab(\uvec) - \Astab(\vvec),\bw} \geq C_M^\M \norm{\bcurl\widetilde{ \bw}}_\bl^2 + C_M^\M \duality{\Steklov \dtrace \bw_\Gamma, \dtrace \bw_\Gamma}_\Gamma\\ - \norm{\bxi}_\SL \sqrt{C_\DL \duality{\Steklov \dtrace \bw_\Gamma, \dtrace \bw_\Gamma}_\Gamma} + \norm{\bxi}_\SL^2 ,
\end{multline*}
which can be written in quadratic form as 
\begin{equation*}
\duality{\Astab(\uvec) - \Astab(\vvec),\bw} \geq C_\mathrm{M}^\M \norm{\bcurl\widetilde{\bw}}^2_\bl+ \duality{ \begin{pmatrix}
C_\mathrm{M}^\M & -\frac{1}{2}\sqrt{C_\DL}  \\ -\frac{1}{2}\sqrt{C_\DL} &1   
\end{pmatrix} \textbf{x}, \textbf{x}}
\end{equation*}%
with $\textbf{x} = \begin{pmatrix}
\sqrt{\duality{\Steklov \dtrace \bw_\Gamma, \dtrace \bw_\Gamma}_\Gamma} , \norm{\bxi}_\SL
\end{pmatrix}^\top$. We see that positive definiteness of the quadratic form is guaranteed, if $C_\mathrm{M}^\M > \frac{1}{4} C_\DL$ holds. By using the smallest eigenvalue of the matrix, we reach with $C_\mathrm{stab} = \min\Big\{C_\mathrm{M}^\M,\frac{1}{2}\left(C_\mathrm{M}^\M+1-\sqrt{\left(C_\mathrm{M}^\M-1\right)^2+  C_\DL }\right)\Big\}$ 
\begin{equation}\label{eq:stability}
\begin{split}
\duality{\Astab(\uvec) - \Astab(\vvec),\bw} & \geq C_\mathrm{stab} \left(\norm{\bcurl\widetilde{ \bw}}_\bl^2+\duality{\Steklov \dtrace \bw_\Gamma, \dtrace \bw_\Gamma}_\Gamma + \norm{\bxi}^2_\SL \right) \\
&\overset{\eqref{eq:splitting}}{=} C_\mathrm{stab}\left(\norm{\bcurl\bw}_\bl^2 + \norm{\bxi}^2_\SL \right).
\end{split}
\end{equation}
The remaining step is to show that $\sqrt{\norm{\bcurl \bw}^2_\bl + \norm{\bxi}^2_\SL}$ defines an equivalent norm to $\norm{\wvec}_\bH $. From the properties of $\SL$, we see that $\norm{\bxi}_\SL$ defines an equivalent norm of $\norm{\bxi}_\hdivg$ for $\bxi \in \hdivzg$. This reduces the problem to showing that $\norm{\bcurl \bw}_\bl$ is an equivalent norm to $\norm{\bw}_\hcurl$ for the representatives in $[\hcurl]$. From Proposition \ref{prop:characHcurl}, we know that $[\hcurl]$ can be identified with $\hcurl \cap \hdivzh$. Then, by using the Friedrichs' inequality \cite[Corollary~3.51]{monk2003finite}, there exists $C_\mathrm{F}>0$ such that 
\begin{equation*}
\norm{\bv}_\bl \leq C_\mathrm{F} \norm{\bcurl \bv}_\bl.
\end{equation*}
Therefore, inserting  
\begin{equation*}
\norm{\bw}_\hcurl \leq (1+C_\mathrm{F}) \norm{\bcurl \bw}_\bl, \quad \bw \in [\hcurl]
\end{equation*}
in \eqref{eq:stability} establishes togtether with the norm equivalence of $\norm{\bxi}_\SL$ and $\norm{\bxi}_\hdivg$ for $\bxi \in \hdivzg$ the strong monotonicity of the non-linear operator $\Astab$.
\end{proof}%
\noindent Finally, we state the main result of this section.
\begin{theorem} [Well-posedness, \cite{zeidler}]\label{theorem:wellposedness}
Let $\Astab: \bH \rightarrow \bH^\prime$ be as defined in \eqref{eq:nonLinearOp}, and let $C_\mathrm{M}^\M > \frac{1}{4} C_\DL$. Then, for $\Astab$ Lipschitz continuous and strongly monotone, Problem \ref{problem:magnetostaticVF} admits a unique solution, for any $(\boldsymbol{f},\boldsymbol{u}_0,\boldsymbol{\phi}_0) \in [\hcurl]^\prime \times [\hcurlg] \times \hdivzg$. 
\end{theorem}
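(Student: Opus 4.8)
The plan is to read Problem \ref{problem:magnetostaticVF} as the operator equation $\Astab(\uvec) = \bell$ in $\bH^\prime$, with $\Astab$ defined through \eqref{eq:nonLinearOp}, and then to invoke the main theorem on strongly monotone operators \cite{zeidler}: if $\Astab$ maps a real Hilbert space $\bH$ into its dual $\bH^\prime$ and is both Lipschitz continuous and strongly monotone, then for every right-hand side $\bell \in \bH^\prime$ the equation $\Astab(\uvec) = \bell$ possesses a unique solution $\uvec \in \bH$. The engine behind this abstract statement is a Banach fixed-point argument (Zarantonello's construction): denoting by $R \colon \bH \rightarrow \bH^\prime$ the Riesz isomorphism, the map $\uvec \mapsto \uvec - \rho\, R^{-1}\bigl(\Astab(\uvec) - \bell\bigr)$ becomes a contraction for $\rho \in \bigl(0, 2 C_\mathrm{E}/C_\mathrm{L}^2\bigr)$, where $C_\mathrm{E}$ and $C_\mathrm{L}$ are the strong monotonicity and Lipschitz constants furnished by Theorem \ref{theorem:monotonicity}. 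Since those two properties have already been secured there under the hypothesis $C_\mathrm{M}^\M > \frac{1}{4} C_\DL$, the substance of the argument is complete and only the structural prerequisites of the abstract theorem remain to be checked.

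First I would confirm that $\bH = [\hcurl] \times \hdivzg$, equipped with the norm \eqref{eq:normH}, is a real Hilbert space. Both factors are Hilbert: by Proposition \ref{prop:characHcurl} the quotient space $[\hcurl]$ is identified with the closed subspace $\hcurl \cap \hdivzh$, which carries the $\hcurl$ inner product, while $\hdivzg$ is a closed subspace of the Hilbert space $\hdivg$. Their Cartesian product with \eqref{eq:normH} is therefore a Hilbert space, and $\bH^\prime$ is its dual.

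Next I would verify that the linear functional of Problem \ref{problem:magnetostaticVF} is bounded, i.e.\ $\bell \in \bH^\prime$. For data $(\boldsymbol{f},\boldsymbol{u}_0,\boldsymbol{\phi}_0) \in [\hcurl]^\prime \times [\hcurlg] \times \hdivzg$ and $\vvec = (\bv,\bpsi) \in \bH$, the three contributions to $\bell(\vvec) = \scalar{\boldsymbol{f},\bv} + \duality{\boldsymbol{\phi}_0,\dtrace \bv}_\Gamma + \duality{\bpsi,(\frac{1}{2}+\DL)\boldsymbol{u}_0}_\Gamma$ are estimated in turn: the first by the duality between $[\hcurl]^\prime$ and $[\hcurl]$; the second by the continuity of $\dtrace \colon [\hcurl]\rightarrow[\hcurlg]$ together with the non-degenerate pairing of $\hdivzg$ and $[\hcurlg]$; and the third by the continuity of the boundary integral operator $\DL$ and the same pairing. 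Combining these yields $\abs{\bell(\vvec)} \leq C \norm{\vvec}_\bH$ for all $\vvec \in \bH$.

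With the Hilbert structure of $\bH$ and the boundedness of $\bell$ in hand, the abstract theorem of \cite{zeidler} applies verbatim and produces a unique $\uvec \in \bH$ solving $\Astab(\uvec) = \bell$, equivalently Problem \ref{problem:magnetostaticVF}. I do not anticipate a genuine obstacle at this final stage: the two delicate ingredients, namely the passage to quotient spaces (which eliminates the kernel of $\bcurl$ and thereby makes strong monotonicity possible) and the unconditional ellipticity of the coupling, were already handled in Theorem \ref{theorem:monotonicity} and Lemma \ref{theorem:contraction}. The present theorem is thus, in effect, a citation of \cite{zeidler} supplemented by the verification that the functional-analytic framework meets its hypotheses.
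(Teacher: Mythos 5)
Your proposal is correct and follows the same route as the paper: the paper's proof is precisely a citation of Zeidler's main theorem on strongly monotone operator equations (Theorem~25.B, Corollary~25.7) combined with the Lipschitz continuity and strong monotonicity established in Theorem~\ref{theorem:monotonicity}. The additional details you supply --- the Hilbert structure of $\bH$, the boundedness of $\bell$, and the Zarantonello fixed-point mechanism --- are standard verifications that the paper leaves implicit, and they are all carried out correctly.
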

\begin{proof}
The result is a consequence of the main theorem on strongly monotone operator equations, see \cite[Theorem~25B,~Corollary~25.7]{zeidler}, together with the assertions of Theorem \ref{theorem:monotonicity}.
\end{proof}%
In practical electromagnetic applications, a typical reluctivity tensor, which is modeled here by the non-linear operator $\M$, takes the following form: $\M \bw := g(\abs{\bw }) \bw $ with $g: \R^3 \rightarrow \R$ a non-linear function. In this special case, a stability result can further be obtained. 
\begin{lemma}
Let the non-linear operator $\M $ be of the form $\M \bw := g(\abs{\bw }) \bw $ with $g: \R^3 \rightarrow \R$ a non-linear function. Moreover, we assume that $C_\mathrm{M}^\M > \frac{1}{4} C_\DL$. Then, for the solution $\uvec \in \bH$ of Problem \ref{problem:magnetostaticVF} with the right-hand sides $(\boldsymbol{f},\boldsymbol{u}_0,\boldsymbol{\phi}_0) \in [\hcurl]^\prime \times [\hcurlg] \times \hdivzg$, there exists $C > 0$ such that 
\begin{equation*}
\norm{\uvec}_\bH \leq C \left( \norm{\boldsymbol{f}}_{[\hcurl]^\prime} + \norm{\bu_0}_{\hcurlg} + \norm{\bphi_0}_\hdivg \right). 
\end{equation*}
\end{lemma}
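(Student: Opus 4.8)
The plan is to combine the strong monotonicity of $\Astab$ from Theorem \ref{theorem:monotonicity} with the boundedness of the right-hand side functional $\bell$, testing the monotonicity estimate against the zero element of $\bH$. The decisive point, and the reason the special form of the reluctivity is assumed, is the identity $\Astab(\boldsymbol{0}) = \boldsymbol{0}$. Indeed, writing $\M \bw = g(\abs{\bw})\bw$ gives $\M \boldsymbol{0} = \boldsymbol{0}$, so inserting $\uvec = (\boldsymbol{0},\boldsymbol{0})$ into \eqref{eq:nonLinearOp} makes every term vanish: the volume term reads $\scalar{\M \boldsymbol{0},\bcurl \bv} = 0$, while the three boundary contributions are linear in the now vanishing arguments $\bphi$ and $\dtrace \bu$. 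Hence $\duality{\Astab(\boldsymbol{0}),\vvec} = 0$ for all $\vvec \in \bH$.

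With this in hand, I would apply the strong monotonicity estimate of Theorem \ref{theorem:monotonicity} (valid since $C_\mathrm{M}^\M > \tfrac{1}{4}C_\DL$) with the solution $\uvec$ and $\vvec = \boldsymbol{0}$, and then use that $\uvec$ solves Problem \ref{problem:magnetostaticVF}:
\begin{equation*}
C_\mathrm{E}\,\norm{\uvec}_\bH^2 \leq \duality{\Astab(\uvec) - \Astab(\boldsymbol{0}),\,\uvec} = \duality{\Astab(\uvec),\uvec} = \ba(\uvec,\uvec) = \bell(\uvec).
\end{equation*}
It then remains to bound $\bell$ by the data. Estimating term by term, I would control $\scalar{\boldsymbol{f},\bv}$ through the duality pairing of $[\hcurl]^\prime$ with $[\hcurl]$, the term $\duality{\boldsymbol{\phi}_0,\dtrace \bv}_\Gamma$ through the continuity of $\dtrace$ into $\hcurlg$ together with the $\hdivzg \times [\hcurlg]$ pairing, and $\duality{\bpsi,(\tfrac{1}{2}+\DL)\boldsymbol{u}_0}_\Gamma$ through the continuity of $\DL$ on $\hcurlg$. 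This produces
\begin{equation*}
\abs{\bell(\vvec)} \leq C\left(\norm{\boldsymbol{f}}_{[\hcurl]^\prime} + \norm{\bu_0}_{\hcurlg} + \norm{\bphi_0}_\hdivg\right)\norm{\vvec}_\bH,
\end{equation*}
and hence the same bound on $\norm{\bell}_{\bH^\prime}$. Inserting $\vvec = \uvec$, combining with the previous display, and dividing by $\norm{\uvec}_\bH$ yields the claimed estimate, with $C$ equal to $1/C_\mathrm{E}$ times the continuity constant of $\bell$.

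The genuinely essential step is the identity $\Astab(\boldsymbol{0}) = \boldsymbol{0}$, which is precisely where the structure $\M \bw = g(\abs{\bw})\bw$ is needed: for a general Lipschitz and strongly monotone $\M$ one would only obtain $\norm{\uvec}_\bH \leq C_\mathrm{E}^{-1}(\norm{\bell}_{\bH^\prime} + \norm{\Astab(\boldsymbol{0})}_{\bH^\prime})$, so the clean homogeneous bound in the data would acquire an additive constant. The remaining continuity estimates for $\bell$ are routine and rely only on the mapping properties of the trace operators and of the boundary integral operator $\DL$ collected in Section \ref{sec:weakFormulation}; no new ingredient beyond strong monotonicity is required.
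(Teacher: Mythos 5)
Your proposal is correct and follows essentially the same route as the paper: the paper's proof invokes the strong monotonicity of $\Astab$ from Theorem \ref{theorem:monotonicity} together with the continuity of $\DL$ and $\dtrace$ (exactly the ingredients you use to bound $\bell$), and defers the remaining details to the analogous stability lemma in \cite{EEK20}, which is precisely the argument you spell out — testing monotonicity against $\boldsymbol{0}$, using $\Astab(\boldsymbol{0})=\boldsymbol{0}$ (the point where the special form $\M\bw = g(\abs{\bw})\bw$ enters), and dividing by $\norm{\uvec}_\bH$. Your write-up makes explicit the step the paper leaves implicit, but there is no methodological difference.
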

\begin{proof}
Theorem \ref{theorem:monotonicity} states that $\Astab$ is strongly monotone for $C_\mathrm{M}^\M > \frac{1}{4} C_\DL$. Moreover, the boundary integral operator $\DL:[\hcurlg]\rightarrow [\hcurlg]$ is a continuous operator. Hence, there exists $C^\DL>0$, such that $\norm{\DL\bpsi}_{\hcurlg} \leq C^\DL \norm{\bpsi}_{\hcurlg}$, $\bpsi \in [\hcurlg]$. The continuity of the trace operator $\dtrace$ leads also to the existence of $C_\mathrm{tr}>0$, such that $\norm{\dtrace\bv}_{\hcurlg} \leq C_\mathrm{tr} \norm{\bv}_{\hcurl}$, for $\bv \in [\hcurl]$. With this, the proof can be done analogously to \cite[Lemma~2.7]{EEK20}, when removing the stabilization term.
\end{proof}%
The next section is devoted to the discretization framework intended to the approximation of the variational formulation given in Problem \ref{problem:varProb}.

\section{Isogeometric Analysis}\label{sec:iga}
Geometry design in modern CAD softwares rely mostly on B-Splines and their extensions such as NURBS, T-Splines etc... We refer to \cite{cottrell2009isogeometric} for a more detailed introduction to the subject.
The main idea of Isogeometric Analysis is to use this same type of basis functions to construct suitable solution spaces for a given problem. In our case, our continuous setting rely solely on Sobolev spaces that are involved in the de~Rham complex, see Figure \ref{fig:deRham}. In \cite{da2014mathematical} and \cite{buffa2020multipatch}, we find the theory that is necessary for our purpose. Hence, we just give here a brief introduction to B-Splines followed by a full discretization of the de~Rham complex, for convenience. \\   

B-Splines of arbitrary dimensions can be easily obtained starting from the one-dimensional case by means of tensor product relationship. In each dimension, a knot vector $\Xi := \{ \xi_0, \dots, \xi_{m} \},$ with  $m \in \mathbb{N}$ needs to be defined. The vector $\Xi$ determines the dimension, the regularity, and the degrees of the arising B-Spline basis functions. In this work, we only consider $p$-open knot vectors, where $p < m$ denotes the degree of the obtained functions. For a $p$-open knot vector $\Xi$, there is 
\begin{equation*}
0 = \xi_0 = \dots = \xi_{p} < \xi_{p+1} \leq \dots \leq \xi_{m-p-1} < \xi_{m-p} = \dots = \xi_{m} = 1.
\end{equation*}   
We denote by $k := m-p $ the number of B-Spline basis functions associated to $\Xi$, which can now be computed recursively for $p\geq 1$ by 
\begin{equation*}
b_{i}^{p} (x) = \frac{x - \xi_i}{\xi_{i+p} - \xi_i} b_{i}^{p-1}(x) + \frac{\xi_{i+p+1}-x}{\xi_{i+p+1}-\xi_{i+1}} b_{i+1}^{p-1}(x),
\end{equation*}
for all $i=0 \dots k-1$, starting the recurrence from a piecewise constant function, 
 \begin{equation*}
b_{i}^{0}(x) = \begin{cases} 1 & \text{if }  \xi_i \leq x \leq \xi_{i+1} \\ 0 & \text{otherwise}  \end{cases}.
\end{equation*}
For some $d$-dimensional domain, we introduce  $\bXi = \Xi_1 \times \cdots \times \Xi_d$ as a multi-dimensional knot vector. It is given by a Cartesian product of $p$-open knot vectors. Equivalently to the one-dimensional case, $\bXi$ induces a vector $\bp \in \mathbb{N}^d$ containing the degrees $\{p_i\}_{i=1\dots d}$ in every parametric direction, and a vector $\bk\in \mathbb{N}^d$ containing the corresponding dimensions $\{k_i\}_{i=1\dots d}$ of the space. Hence, a B-Spline space on a $d$-dimensional domain can be constructed with
\begin{equation*}
\mathbb{S}_{\bp}(\bXi) = \Span{\{b_{\bi}^{\bp} \}},
\end{equation*} 
where $b_{\bi}^{\bp} := \prod_{j=1}^d \{b_{i_j}^{p_j}\}_{i_j=0 \dots k_j-1}$. Starting from the space $\mathbb{S}_{\bp}(\bXi)$, an exact sequence of B-Spline spaces can be defined. Let $\bx=\{x_i\}_{i = 1,\dots,d} \in \R^d$. For the derivative of a B-Spline space with respect to $x_i$, which also consists of B-Spline functions with a reduced knot vector in the $i$-th component, cf. \cite[Equation~2.7]{da2014mathematical}, we adopt the following notation
\begin{equation*}
\frac{\partial }{\partial x_i}\mathbb{S}_{\bp}(\bXi) = \frac{\partial }{\partial x_i}\mathbb{S}_{(p_1,\dots,p_i,\dots,p_d)}(\Xi_1,\dots,\Xi_i, \dots,\Xi_d) = \mathbb{S}_{(p_1,\dots,p_i-1,\dots,p_d)}(\Xi_1,\dots,\Xi_i^\prime, \dots,\Xi_d),
\end{equation*}
where $\Xi_i^\prime = \{\xi_1,\dots,\xi_{m-1} \}$, see \cite[Section~5.2]{da2014mathematical}. In a three-dimensional parametric space, which is represented by $\bXi$, we introduce the following B-Spline spaces
\begin{equation}\label{eq:BsplineParam}
\begin{aligned}
\mathbb{S}^0_{\bp}(\bXi) &= \mathbb{S}_{(p_1,p_2,p_3)}(\Xi_1,\Xi_2, \Xi_3),\\ 
\mathbb{S}^1_{\bp}(\bXi) &=  \mathbb{S}_{(p_1-1,p_2,p_3)}(\Xi_1^\prime,\Xi_2, \Xi_3) \times \mathbb{S}_{(p_1,p_2-1,p_3)}(\Xi_1,\Xi_2^\prime, \Xi_3) \times \mathbb{S}_{(p_1,p_2,p_3-1)}(\Xi_1,\Xi_2, \Xi_3^\prime) ,\\ 
\mathbb{S}^2_{\bp}(\bXi) &=  \mathbb{S}_{(p_1,p_2-1,p_3-1)}(\Xi_1,\Xi_2^\prime, \Xi_3^\prime) \times \mathbb{S}_{(p_1-1,p_2,p_3-1)}(\Xi_1^\prime,\Xi_2, \Xi_3^\prime) \times \mathbb{S}_{(p_1-1,p_2-1,p_3)}(\Xi_1^\prime,\Xi_2^\prime, \Xi_3),\\ 
\mathbb{S}^3_{\bp}(\bXi) &=  \mathbb{S}_{(p_1-1,p_2-1,p_3-1)}(\Xi_1^\prime,\Xi_2^\prime, \Xi_3^\prime),
\end{aligned} 
\end{equation}
to form an exact sequence \cite[Section~5.2]{da2014mathematical}: 
\begin{equation*}
\mathbb{S}^0_{\bp}(\bXi) \xrightarrow{\nabla} \mathbb{S}^1_{\bp}(\bXi)\xrightarrow{\bcurl} \mathbb{S}^2_{\bp}(\bXi)\xrightarrow{\Div} \mathbb{S}^3_{\bp}(\bXi).
\end{equation*}

As mentioned previously, the same type of basis functions are used for discretization purposes as well as for the representation of the geometry, which we assume here to be a compact orientable manifold $K$ of dimension $d \leq 3$ embedded in a three-dimensional Euclidean space. Moreover, we assume that there exists a regular tessellation $K = \bigcup_{l=0}^{N_\kappa-1} \kappa_l$, such that for all $l_1,l_2 = 0,\dots,N_\kappa-1, \, l_1\neq l_2,$ $\kappa_{l_1} \cap \kappa_{l_2}$ is empty. Thereby, $\{\kappa_l\}_{l=0,\dots,N_\kappa-1}$ is a set of $N_\kappa$ patches, which is given by a family of diffeomorphisms  
$\{\boldsymbol{f}_l : [0,1]^d \rightarrow \kappa_l\}_{l = 0,\dots,N_\kappa-1}$. In the context of Isogeometric Analysis, we define the parametrizations $\{\boldsymbol{f}_l\}_{l = 0,\dots,N_\kappa-1}$ in terms of B-Spline basis functions, namely, 
\begin{equation}\label{eq:parametrization}
\boldsymbol{f}_l(\bx) = \sum_{\bi\in \I_l} \bc_\bi b_{\bi}^{\bp} (\bx),
\end{equation} 
where $\I_l= \{ \bi = (i_1,\dots,i_d): i_j = 0,\dots,k_j - 1 \text{ with } j=1,\dots, d\}$ is a the set of multi-indices corresponding to patch $l$, and $\bc_\bi \in \R^3$ are elements of a set of control points, see \cite{piegl2012nurbs}, for more details about the definition of B-Spline functions and curves. 
Note that the parametrizations $\{\boldsymbol{f}_l\}_{l = 0,\dots,N_\kappa-1}$ are consequently disjoint. With this at hand, we can now define B-Spline spaces in the physical domains by mapping the locally defined B-Splines in the parameter domain to the patches $\kappa_l$ of the considered domain. Depending on the aimed regularity of the spaces, different push-forwards may be applied. In this work, we resort to the application of the following push-forwards
\begin{align*}
\iota_0^{-1}(\boldsymbol{f}_l)(v) &:= v \circ \boldsymbol{f}_l^{-1}, \\
\iota_1^{-1}(\boldsymbol{f}_l)(\bv) &:=(\opd \boldsymbol{f}_l^{\top})^{-1} (\bv \circ \boldsymbol{f}_l^{-1}), \\
\iota_2^{-1}(\boldsymbol{f}_l)(\bv) &:= \frac{(\opd \boldsymbol{f}_l) (\bv \circ \boldsymbol{f}_l^{-1})}{\det(\opd \boldsymbol{f}_l)}, \\
\iota_3^{-1}(\boldsymbol{f}_l)(v) &:= \frac{v \circ \boldsymbol{f}_l^{-1} }{\det(\opd \boldsymbol{f}_l)},
\end{align*}
where $\opd\boldsymbol{f}_l$ denotes the Jacobian of $\boldsymbol{f}_l$, and $\iota_j, \, j=0,1,2,3$, denote the corresponding pull-backs. From \cite[Section~5.1]{da2014mathematical}, we gather that the pull-backs $\iota_0$, $\iota_1$, and $\iota_2$ are $\nabla$, $\bcurl$, and $\Div$-preserving, respectively. This yields the following discrete spaces  over an arbitrary single patch $\kappa_l$
\begin{align*}
\mathbb{S}^0_{\bp}(\kappa_l) &= \{v : v = \iota_0^{-1}(\boldsymbol{f}_l)(u), u \in \mathbb{S}^0_{\bp}(\bXi) \},\\
\mathbb{S}^1_{\bp}(\kappa_l) &= \{\bv : \bv = \iota_1^{-1}(\boldsymbol{f}_l)(\bu), \bu \in \mathbb{S}^1_{\bp}(\bXi) \},\\
\mathbb{S}^2_{\bp}(\kappa_l) &= \{\bv : \bv = \iota_2^{-1}(\boldsymbol{f}_l)(\bu), \bu \in \mathbb{S}^2_{\bp}(\bXi) \},\\
\mathbb{S}^3_{\bp}(\kappa_l) &= \{v : v = \iota_3^{-1}(\boldsymbol{f}_l)(u), u \in \mathbb{S}^3_{\bp}(\bXi) \}.
\end{align*} 
We note here that due to the properties of the pull-backs, the obtained diagram is commutative, see \cite[Section~2.2]{hiptmair2002}. 
Since we are excluding non-watertight geometries and geometries with T-junctions from our consideration, we require the parametrizations to coincide at all interfaces, i.e., for all $l_1\neq l_2,$ if $\partial K = \partial\kappa_{l_1} \cap \partial\kappa_{l_2} \neq \emptyset$, we enforce $\boldsymbol{f}_{l_1} = \boldsymbol{f}_{l_2}$. Hence, $\bp$ and $\bXi$ coincide on every patch interface, which either consists of a common surface or edge, or reduces to a single common point. Under these considerations, we call the obtained representation a multipatch domain, and define the following global B-Spline spaces in a physical domain $K = \Omega \subset \R^3$ by
\begin{equation}\label{eq:BsplinePhys}
 \begin{aligned}
\mathbb{S}^0_{\bp}(\Omega) &= \{v \in H^1(\Omega) : v_{|\Omega_l} \in \mathbb{S}^0_{\bp}(\bXi),\, \forall \,0 \leq l<N_\kappa \},\\
\mathbb{S}^1_{\bp}(\Omega) &= \{\bv \in \hcurl : \bv _{|\Omega_l} \in \mathbb{S}^1_{\bp}(\bXi),\, \forall \,0 \leq l<N_\kappa \},\\
\mathbb{S}^2_{\bp}(\Omega) &= \{\bv \in \hdiv : \bv _{|\Omega_l} \in \mathbb{S}^2_{\bp}(\bXi),\, \forall \,0 \leq l<N_\kappa \},\\
\mathbb{S}^3_{\bp}(\Omega) &= \{v \in L^2(\Omega) : v _{|\Omega_l} \in \mathbb{S}^3_{\bp}(\bXi),\, \forall \,0 \leq l<N_\kappa \},
\end{aligned}
\end{equation} 
where $ \Omega_l$ denotes a patch, and $N_\kappa$ the number of patches. With this definition, the exact sequence property still holds. \\    For $K = \partial \Omega:= \Gamma$, the global B-Spline spaces can be obtained analogously by squeezing the third dimension in \eqref{eq:BsplineParam}, i.e., by removing $\Xi_3$ and the corresponding dimension $p_3$, together with the third terms in $\mathbb{S}^1_{\bp}({\bXi})$ and $\mathbb{S}^2_{\bp}({\bXi})$. We denote by $\bpt$ and $\bXit$ the reduced vector of degrees and the reduced knot vector, respectively. We refer to \cite{buffa2020multipatch} for more details. Alternatively, they can be obtained by applying the appropriate trace operators introduced in the previous section to \eqref{eq:BsplinePhys}, namely, $\gamma_0: H^1(\Omega) \rightarrow \spht$, $\dtrace: \hcurl \rightarrow \hcurlg$, $\twtrace: \hcurl  \rightarrow \hdivg$, and $\gamma_\bn:\hdiv \rightarrow \dht$. In either cases, we obtain equivalent spaces. 
By using the latter approach, we introduce
\begin{equation}\label{eq:BsplinePhysTrace}
\begin{aligned}
\mathbb{S}^0_{\bpt}(\Gamma) &= \gamma_0(\mathbb{S}^0_{\bp}(\Omega)),\\
\mathbb{S}^{1,\parallel}_{\bpt}(\Gamma) &= \dtrace(\mathbb{S}^1_{\bp}(\Omega) ),\\
\mathbb{S}^{1,\perp}_{\bpt}(\Gamma) &= \twtrace(\mathbb{S}^1_{\bp}(\Omega) ),\\
\mathbb{S}^2_{\bpt}(\Gamma) &= \gamma_\bn(\mathbb{S}^2_{\bp}(\Omega)).
\end{aligned}
\end{equation} 
Moreover, by observing that $\twtrace = \dtrace \times \bn$, we arrive at a full discretization of the de~Rham complex of Figure~\ref{fig:deRham} using conforming B-Spline spaces in Figure~\ref{fig:deRhamBsplines}, see \cite{buffa2020multipatch}. \\
%
 \begin{figure}[t!]
\begin{tikzcd}
\mathbb{S}^0_{\bp}(\Omega) \arrow[r, "\nabla"] \ar[dd,"\gamma_0"]
& \mathbb{S}^1_{\bp}(\Omega) \arrow[r, "\bcurl"] \arrow[d,"\dtrace"] \ar[ddd, "\twtrace"pos=0.75, bend right=40]
& \mathbb{S}^2_{\bp}(\Omega) \arrow[r, "\Div"] \ar[dd,"\gamma_\bn"]
& \mathbb{S}^3_{\bp}(\Omega)\\
& \mathbb{S}^{1,\parallel}_{\bpt}(\Gamma) \ar[dr,"\curl_\Gamma"] \ar[dd,"\times\bn"]
&
&[1.5em] \\
\mathbb{S}^0_{\bpt}(\Gamma) \ar[ur, "\nabla_\Gamma"] \ar[dr, "\bcurl_\Gamma"']
&
& \mathbb{S}^2_{\bpt}(\Gamma) \\
& \mathbb{S}^{1,\perp}_{\bpt}(\Gamma) \ar[ur,"\Div_\Gamma"']
&
&
\end{tikzcd} 
\caption{Conforming discretization of the de~Rham complex (Figure \ref{fig:deRham}) using B-Spline spaces.} \label{fig:deRhamBsplines}
 \end{figure}
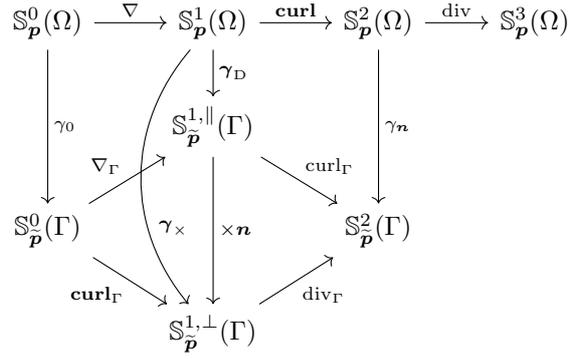%

Now we introduce a discrete counterpart for $\hdivzg$. Let $\mathbb{S}^{1,\perp}_{\bpt,0}(\Gamma)$ denote the subspace of $\mathbb{S}^{1,\perp}_{\bpt}(\Gamma)$ that contains solenoidal surface divergence conforming B-Splines, i.e.,
\begin{equation*}
\mathbb{S}^{1,\perp}_{\bpt,0}(\Gamma) = \{\bpsi \in \mathbb{S}^{1,\perp}_{\bpt}(\Gamma) : \Div_\Gamma \bpsi = 0\}.
\end{equation*}
Analogously to \cite{hiptmairSymmetric}, this subspace can be further characterized by exploiting the exactness of the de~Rham complex for trivial topologies. Hence, we obtain $\mathbb{S}^{1,\perp}_{\bpt,0}(\Gamma) = \bcurl_\Gamma \mathbb{S}^0_{\bpt}(\Gamma) $.
 \begin{remark}
 	A generalization of $\mathbb{S}^{1,\perp}_{\bpt,0}(\Gamma)$ to non-trivial topologies is possible, see \cite{hiptmair2005coupled}. It requires an additional term representing surface cohomology vector fields, namely,
 	\begin{equation*}
 		\mathbb{S}^{1,\perp}_{\bpt,0}(\Gamma) = \bcurl_\Gamma \mathbb{S}^0_{\bpt}(\Gamma) + \mathfrak{H}_\ell, \quad \dim \mathfrak{H}_\ell = \beta_1(\Gamma)
 	\end{equation*}
 	with $\beta_1(\Gamma)$ denoting the first Betti number of $\Gamma$. We refer the reader to \cite{hiptmairSymmetric} and \cite{hiptmair2002generators} for more information, and for a construction of the needed bases for $\mathfrak{H}_\ell$, respectively.
 \end{remark}
One of the main motivations in considering Isogeometric Analysis is the combination of the design process with the numerical analysis, which means again that simulations are performed directly on the designed geometric model. This is clearly more advantageous than classical approaches, since  it allows to avoid approximation errors with respect to the geometry representation. However, in some cases, such as geometries involving conic  sections (except the parabola), B-Splines are not sufficient to yield an exact representation. In such cases, NURBS can be used for geometric modeling instead. A NURBS function can be defined as follows 
 \begin{equation*}
r_\bi^\bp(\bx) := \frac{ w_\bi b_{\bi}^{\bp} (\bx)}{\sum_{\bj \in \I} w_\bj b_{\bj}^{\bp} (\bx)} ,
\end{equation*}
where $\bi \in \mathcal{I}$ is a set of multi-indices, compare with \eqref{eq:parametrization}. Replacing $b_\bi^\bp(\bx)$ in \eqref{eq:parametrization} by $r_\bi^\bp(\bx)$ leads to a NURBS parametrization, which is capable to represent free-form and sculptured surfaces as well as conic sections \cite{da2014mathematical,piegl2012nurbs}. However, due to the weighting function in the denominator $\sum_{\bj \in \I} w_\bj b_{\bj}^{\bp} (\bx)$, multivariate NURBS spaces cannot be constructed by means of tensor products. This explains the motivation behind resorting to B-Spline spaces for the discretization of the de~Rham complex. \\   
Using B-Splines in numerics and NURBS for design clearly violates one of main claims of Isogeometric Analysis, which consists in using the same basis functions in both design and analysis processes. Nevertheless, by observing the definition of a NURBS function, and since B-Splines form a partition of unity \cite{piegl2012nurbs}, we see that they can be interpreted as NURBS, which are defined on the hyperplane corresponding to $(w_i )_{i=0,\dots,k-1}= 1$. Therefore, we choose to keep labeling the presented coupling isogeometric. 

\subsection{Discrete problem and a~priori error estimates}\label{subsec:estimates}
The essence of numerical methods is to approximate infinite-dimensional spaces with suitable finite-dimensional counterparts. To this aim, we opt for a conform discretization, such that the results that we established in the continuous setting can be transferred to the discrete one.\\ Let $\bV \subset \hcurl$, and $\bX \subset \hdivg$ be some finite-dimensional subspaces with refinement level $\ell$. A sequence of refined discrete spaces has to be unterstood in the following sense
\begin{equation*}
\lim_{\ell \rightarrow \infty} \bV = \hcurl, \quad \lim_{\ell \rightarrow \infty} \bX = \hdivg.
\end{equation*}
Equivalently to the continuous setting, we may define the quotient space $[\bV] := \bV/\bVz$ with $\bVz := \{\bvl \in \bV: \bcurl \bvl = \boldsymbol{0} \}$. Furthermore, there exists an orthogonal projection $\mathsf{P}_\ell: \bV \rightarrow \bVzp $, where $\bVzp$ denotes the orthogonal complement of $\bVz$ in $\bV$. Hence, it holds that $\bVzp \cong [\bV]$. For the Neumann data, we define the discrete counterpart of $\hdivzg$ by $\bXz := \{ \bpsil \in \bX: \Div \bpsil = 0 \}$. \\
Replacing $\bH:=[\hcurl] \times \hdivzg$ by $\bHl:= [\bV] \times \bXz$ in the variational problem \ref{problem:varProb} yields a discrete weak formulation.
\begin{problem}\label{problem:varProbdisc}
 Find $\uvecl: = (\bul,\bphil) \in \bHl $ such that 
\begin{align*}
 \scalar{\M\bcurl\bul,\bcurl \bvl }- \duality{\bphil, \dtrace \bvl}_\Gamma &= \scalar{\boldsymbol{f} ,\bvl} + \duality{\boldsymbol{\phi}_0, \dtrace \bvl}_\Gamma, \\ 
 \duality{\bpsil, \SL \bphil}_\Gamma + \duality{\bpsil , (\frac{1}{2} + \DL) \dtrace \bul}_\Gamma &= \duality{\bpsil , (\frac{1}{2} + \DL) \boldsymbol{u}_0}_\Gamma 
\end{align*}  
for all $\vvecl:= (\bvl,\bpsil) \in \bHl$.
\end{problem}
Due to the conforming discretization, Theorem~\ref{theorem:wellposedness} applies. Consequently, there exists a unique $\uvecl: = (\bul,\bphil) \in \bHl:= [\bV] \times \bXz$ that solves Problem~\ref{problem:varProbdisc} under the same conditions, namely, Lipschitz continuity and strong monotonicity of $\M$ provided $C_\mathrm{M}^\M > \frac{1}{4} C_\DL$. \\
Analogously to \cite[Theorem~3.2]{EEK20}, there holds quasi-optimality in the sense of the C\'ea-type Lemma, i.e., for $\uvec = (\bu,\bphi) \in \bH $ being the solution of Problem~\ref{problem:varProb} and $\uvecl: = (\bul,\bphil) \in \bHl$ the solution of Problem~\ref{problem:varProbdisc}, there exists $C_\mathrm{C} = \frac{C_L}{C_M}$ such that  
\begin{equation}\label{eq:quasioptimality}
\begin{split}
\norm{\bu-\bul}_{\hcurl} + \norm{\bphi- \bphil}_\hdivg \leq C_\mathrm{C} \min_{\bvl \in \bV, \bpsil \in \bXz} &(  \norm{\bu-\bvl}_{\hcurl} \\ &+ \norm{\bphil- \bpsil}_\hdivg ).
\end{split} 
\end{equation}

Henceforth, we set $\bV = \mathbb{S}^1_{\bp}(\Omega)$ and $\bX = \mathbb{S}^{1,\perp}_{\bpt}(\Gamma)$. 
In the previous section, we introduced the space 
$\mathbb{S}^{1,\perp}_{\bpt,0}(\Gamma) = \bcurl_\Gamma \mathbb{S}^0_{\bpt}(\Gamma) $ as a suitable discrete space for the Neumann data. Therefore, we set $\bXz = \mathbb{S}^{1,\perp}_{\bpt,0}(\Gamma)$.
\begin{definition}\label{def:patchwiseReg}
Let $K = \{\Omega, \Gamma\}$ be a multipatch domain with $N_\kappa$ patches, and let $\boldsymbol{\operatorname{d}}$ be some differential operator. For some $s \in \R$, we define the space of patchwise regularity endowed with the norm
\begin{align*}
 \label{eq:pwnorm}
\norm{\bu}_{\boldsymbol{H}_\mathrm{pw}^{s}(\boldsymbol{\operatorname{d}},K)}^2 = \sum_{0<i\leq N_\kappa} \norm{\bu_{|K_i}}_{\boldsymbol{H}^{s}(\boldsymbol{\operatorname{d}},K)}^2
\end{align*}
by 
\begin{equation*}
\boldsymbol{H}_\mathrm{pw}^{s}(\boldsymbol{\operatorname{d}},K) = \{ \bu \in \boldsymbol{L}^2(K): \norm{\bu}_{\boldsymbol{H}_\mathrm{pw}^{s}(\boldsymbol{\operatorname{d}},K)} < \infty \}.
\end{equation*}
Thereby, $\boldsymbol{H}^{s}(\boldsymbol{\operatorname{d}},K)$ denotes an energy space of regularity $s$, formally,  
\[\boldsymbol{H}^{s}(\boldsymbol{\operatorname{d}},K) := \{ \bv \in \boldsymbol{H}^{s}(K): \, \boldsymbol{\operatorname{d}}\bv \in \boldsymbol{H}^{s}(K) \}.\]
\end{definition}
The approximation properties of B-Spline spaces for the discretization of the de~Rham complex in Figure \ref{fig:deRham} are provided in \cite{da2014mathematical} and \cite{buffa2020multipatch} for multipatch domains and their trace spaces. In particular, for $\mathbb{S}^1_{\bp}(\Omega)$ and $\mathbb{S}^{1,\perp}_{\bpt}(\Gamma)$, we state the following results. 
\begin{lemma}\label{lemma:approxProp}
Let $\bu\in \hcurl \cap \boldsymbol{H}_\mathrm{pw}^{s}(\bcurl,\Omega)$ and $\bphi\in \hdivg\cap \boldsymbol{H}_\mathrm{pw}^{s}(\Div_\Gamma,\Gamma)$. There exists $C_0,\,C_{1,1},\,C_{1,2} >0$ such that
\begin{subequations}
\begin{align}
\inf_{\bul \in \mathbb{S}_{\bp}^1(\Omega) }\norm{\bu-\bul}_{\hcurl}  &\leq C_0\, h^{s} \norm{\bu}_{\boldsymbol{H}_\mathrm{pw}^{s}(\bcurl,\Omega)},  && 2< s \leq p, \label{eq:approxHcurl}\\
\inf_{\bphil \in\mathbb{S}^{1,\perp}_{\bpt}(\Gamma)}\norm{\bphi-\bphil}_\hdivg &\leq C_{1,1}\, h^{s} \norm{\bphi}_{\boldsymbol{H}^{s-\frac{1}{2}}(\Div_\Gamma,\Gamma)}, && 0\leq s \leq \frac{1}{2}, \\
\inf_{\bphil \in\mathbb{S}^{1,\perp}_{\bpt}(\Gamma)}\norm{\bphi-\bphil}_\hdivg &\leq C_{1,2}\, h^{s} \norm{\bphi}_{\boldsymbol{H}_\mathrm{pw}^{s-\frac{1}{2}}(\Div_\Gamma,\Gamma)}, && \frac{1}{2}\leq s \leq p+\frac{1}{2}. 
\end{align}
\end{subequations}
\end{lemma}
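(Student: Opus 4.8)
The plan is to treat the three estimates as consequences of the approximation theory for the isogeometric de~Rham complex, relying on the volumetric results of \cite{da2014mathematical} and the trace-space results of \cite{buffa2020multipatch}. In each case I would realize the infimum by a concrete quasi-interpolation (projection) operator onto the respective spline space that is part of a commuting diagram, reduce the estimate patchwise to the reference cube $[0,1]^d$ via the parametrizations $\boldsymbol{f}_l$ and the pull-backs $\iota_j$, invoke the known tensor-product spline approximation estimate on the reference patch, and finally sum the local contributions and push forward. The $h$-power $s$ and the degree caps $s\le p$ (resp. $s\le p+\tfrac{1}{2}$) are exactly those delivered by the reference-patch estimates, while the constants $C_0,C_{1,1},C_{1,2}$ absorb the geometry of the (shape-regular) parametrizations.

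For the interior bound \eqref{eq:approxHcurl} I would take the $\bcurl$-conforming projector $\Pi^1 : \hcurl \to \mathbb{S}^1_{\bp}(\Omega)$ from the commuting diagram of Figure~\ref{fig:deRhamBsplines}, so that $\bcurl \Pi^1 \bu = \Pi^2 \bcurl \bu$ with $\Pi^2$ the $\Div$-conforming projector onto $\mathbb{S}^2_{\bp}(\Omega)$. Splitting the graph norm,
\begin{equation*}
\norm{\bu - \Pi^1 \bu}_{\hcurl}^2 = \norm{\bu - \Pi^1 \bu}_{\bl}^2 + \norm{\bcurl \bu - \Pi^2 \bcurl \bu}_{\bl}^2 ,
\end{equation*}
the two summands are controlled separately by the patchwise $\boldsymbol{L}^2$-estimates for $\mathbb{S}^1_{\bp}$ and $\mathbb{S}^2_{\bp}$, each of order $h^s$; summing over the $N_\kappa$ patches produces the patchwise norm $\norm{\bu}_{\boldsymbol{H}_\mathrm{pw}^{s}(\bcurl,\Omega)}$. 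The lower bound $2<s$ is inherited from the regularity needed for the degrees of freedom of the projector to be well defined on the Lipschitz multipatch geometry, and the upper bound $s\le p$ from the approximation order of the lowest-degree component.

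For the two boundary bounds I would use $\mathbb{S}^{1,\perp}_{\bpt}(\Gamma) = \twtrace\,\mathbb{S}^1_{\bp}(\Omega)$ together with the surface portion of the commuting diagram, so that the approximations of $\bphi$ and of $\Div_\Gamma \bphi$ are obtained simultaneously and the full $\hdivg$ graph norm is controlled. The split into the two ranges of $s$ reflects the regularity that traces carry across patch interfaces: for $0\le s\le \tfrac{1}{2}$ the datum still lies in a genuinely global fractional space, and a globally defined estimate applies with $\norm{\bphi}_{\boldsymbol{H}^{s-\frac{1}{2}}(\Div_\Gamma,\Gamma)}$; whereas for $\tfrac{1}{2}\le s\le p+\tfrac{1}{2}$ global smoothness is lost at the interfaces and one must measure the datum in the patchwise norm $\norm{\bphi}_{\boldsymbol{H}_\mathrm{pw}^{s-\frac{1}{2}}(\Div_\Gamma,\Gamma)}$. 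Both estimates, including the index shift by $\tfrac{1}{2}$ coming from the trace, are quoted directly from \cite{buffa2020multipatch}.

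The main obstacle is not a single hard inequality but the careful bookkeeping at the interface between the cited results and the present setting. Concretely, the delicate points are (i) ensuring that the commuting projectors deliver their approximation orders in the energy (graph) norms rather than only in plain $\boldsymbol{L}^2$, which is what lets the $\bcurl$ and $\Div_\Gamma$ terms be absorbed, and (ii) justifying the change of regularity measure at $s=\tfrac{1}{2}$ and matching the shifted boundary index $s-\tfrac{1}{2}$ to the volumetric index $s$. Both are resolved by the watertight, matching-parametrization multipatch assumption imposed earlier, so that $\bp$ and $\bXi$ agree across every interface and the trace spaces in \eqref{eq:BsplinePhysTrace} are conforming; this is precisely the hypothesis under which the quoted theorems of \cite{da2014mathematical,buffa2020multipatch} hold.
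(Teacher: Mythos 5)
Your proposal is correct and takes essentially the same route as the paper: the paper's proof consists of citing \cite[Corollary~5]{buffa2020multipatch} for the $\hcurl$ estimate and \cite[Theorem~3]{buffa2020multipatch} for the two $\hdivg$ estimates, which are precisely the results you invoke. The additional machinery you sketch (commuting quasi-interpolants, patchwise pull-backs, graph-norm splitting, and the explanation of the regularity thresholds) is a sound reconstruction of what lies inside those cited results, but the paper itself leaves all of it to the references.
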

\begin{proof}
The first estimate in $\hcurl$ is given in \cite[Corollary~5]{buffa2020multipatch}. The $\hdivg$ estimates are proved in \cite[Theorem~3]{buffa2020multipatch}.
\end{proof}
\begin{theorem}[A~priori estimate] \label{theorem:PrioriEstimate}
Let $C^\M_\mathrm{E}> \frac{1}{4}C_\DL$. Moreover, let $(\bu,\bphi) \in \bH$ be the solution of the Problem \ref{problem:varProb} and 
let $(\bul,\bphil) \in \bHl = \mathbb{S}_{\bp}^1(\Omega) \times \mathbb{S}^{1,\perp}_{\bpt,0}(\Gamma)$ 
be the solution of the discrete Problem \ref{problem:varProbdisc}.
Then, there exists a constant $C>0$ such that:
\begin{itemize}
\item For $ 0 \leq s \leq \frac{1}{2}$, 
\begin{equation*}
\norm{\bu-\bul}_{\hcurl} + \norm{\bphi-\bphil}_\hdivg \leq C h^{s}\left( \norm{\bu}_{\boldsymbol{H}_\mathrm{pw}^{2}(\bcurl,\Omega)} + \norm{\bphi}_{\boldsymbol{H}^{s-\frac{1}{2}}(\Div_\Gamma,\Gamma)}\right)
\end{equation*}
with $\bu\in \hcurl \cap \boldsymbol{H}_\mathrm{pw}^{2}(\bcurl,\Omega)$ and $\bphi\in \hdivg\cap \boldsymbol{H}^{s-\frac{1}{2}}(\Div_\Gamma,\Gamma)$.
\item For $\frac{1}{2} \leq s \leq 2$,
\begin{equation*}
\norm{\bu-\bul}_{\hcurl} + \norm{\bphi-\bphil}_\hdivg \leq C h^{s}\left( \norm{\bu}_{\boldsymbol{H}_\mathrm{pw}^{2}(\bcurl,\Omega)} + \norm{\bphi}_{\boldsymbol{H}_\mathrm{pw}^{s-\frac{1}{2}}(\Div_\Gamma,\Gamma)}\right)
\end{equation*}
with $\bu\in \hcurl \cap \boldsymbol{H}_\mathrm{pw}^{2}(\bcurl,\Omega)$ and $\bphi\in \hdivg\cap \boldsymbol{H}_\mathrm{pw}^{s-\frac{1}{2}}(\Div_\Gamma,\Gamma)$.
\item For $  2 < s \leq p$,
\begin{equation*}
\norm{\bu-\bul}_{\hcurl} + \norm{\bphi-\bphil}_\hdivg \leq C h^{s}\left( \norm{\bu}_{\boldsymbol{H}_\mathrm{pw}^{s}(\bcurl,\Omega)} + \norm{\bphi}_{\boldsymbol{H}_\mathrm{pw}^{s-\frac{1}{2}}(\Div_\Gamma,\Gamma)}\right)
\end{equation*}
with $\bu\in \hcurl \cap \boldsymbol{H}_\mathrm{pw}^{s}(\bcurl,\Omega)$ and $\bphi\in \hdivg\cap \boldsymbol{H}_\mathrm{pw}^{s-\frac{1}{2}}(\Div_\Gamma,\Gamma)$.
\end{itemize}
\end{theorem}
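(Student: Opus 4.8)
The plan is to derive the three estimates from the quasi-optimality bound \eqref{eq:quasioptimality} by inserting the approximation properties of Lemma~\ref{lemma:approxProp}, splitting the analysis according to the three regimes of $s$. First I would invoke \eqref{eq:quasioptimality} to reduce the error to the two best-approximation errors,
\[
\norm{\bu-\bul}_{\hcurl} + \norm{\bphi-\bphil}_\hdivg \leq C_\mathrm{C}\Big( \inf_{\bvl \in \mathbb{S}^1_{\bp}(\Omega)} \norm{\bu-\bvl}_{\hcurl} + \inf_{\bpsil \in \mathbb{S}^{1,\perp}_{\bpt,0}(\Gamma)} \norm{\bphi-\bpsil}_\hdivg \Big),
\]
so that it remains to bound the two infima separately and combine them in each range of $s$.

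For the finite element contribution I would distinguish the regimes $s\le 2$ and $s>2$. In the cases $0\le s\le\tfrac12$ and $\tfrac12\le s\le 2$ only the regularity $\bu\in\boldsymbol{H}_\mathrm{pw}^{2}(\bcurl,\Omega)$ is assumed; applying \eqref{eq:approxHcurl} at regularity index $2$ yields the rate $h^{2}$, and since $h\le 1$ and $s\le 2$ we have $h^{2}\le h^{s}$, so this term is bounded by $C\,h^{s}\norm{\bu}_{\boldsymbol{H}_\mathrm{pw}^{2}(\bcurl,\Omega)}$ and is never the limiting factor. In the remaining case $2<s\le p$ the full regularity $\bu\in\boldsymbol{H}_\mathrm{pw}^{s}(\bcurl,\Omega)$ is available and \eqref{eq:approxHcurl} applies directly to give $C\,h^{s}\norm{\bu}_{\boldsymbol{H}_\mathrm{pw}^{s}(\bcurl,\Omega)}$.

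The boundary element contribution requires more care, and I expect the matching of discrete spaces to be the main obstacle. Lemma~\ref{lemma:approxProp} provides rates in the \emph{full} trace space $\mathbb{S}^{1,\perp}_{\bpt}(\Gamma)$, whereas the discrete unknown lives in the solenoidal subspace $\mathbb{S}^{1,\perp}_{\bpt,0}(\Gamma)=\bcurl_\Gamma\mathbb{S}^0_{\bpt}(\Gamma)$. Since the exact flux satisfies $\Div_\Gamma\bphi=0$, I would exploit the commutativity of the discrete de~Rham complex (Figure~\ref{fig:deRhamBsplines}): the associated commuting (Fortin-type) projector onto $\mathbb{S}^{1,\perp}_{\bpt}(\Gamma)$ maps the solenoidal field $\bphi$ to a field whose surface divergence again vanishes, hence into $\mathbb{S}^{1,\perp}_{\bpt,0}(\Gamma)$. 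Equivalently, writing $\bphi=\bcurl_\Gamma\varphi$ by exactness and approximating the scalar potential $\varphi$ in $\mathbb{S}^0_{\bpt}(\Gamma)$ produces a solenoidal approximant at no loss of rate. Either way, the infimum over the subspace is controlled by the estimate over the full space, so the second and third rates of Lemma~\ref{lemma:approxProp} transfer: the estimate at $0\le s\le\tfrac12$ gives $C\,h^{s}\norm{\bphi}_{\boldsymbol{H}^{s-\frac{1}{2}}(\Div_\Gamma,\Gamma)}$, and the estimate at $\tfrac12\le s\le p+\tfrac12$ gives $C\,h^{s}\norm{\bphi}_{\boldsymbol{H}_\mathrm{pw}^{s-\frac{1}{2}}(\Div_\Gamma,\Gamma)}$, the latter covering both $\tfrac12\le s\le 2$ and $2<s\le p$ since $p<p+\tfrac12$.

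Finally, I would assemble the three stated inequalities by adding the finite element and boundary element bounds in each regime: in $0\le s\le\tfrac12$ and $\tfrac12\le s\le 2$ the finite element part carries $\norm{\bu}_{\boldsymbol{H}_\mathrm{pw}^{2}(\bcurl,\Omega)}$ while the boundary part uses the appropriate $\Div_\Gamma$-norm, and in $2<s\le p$ both parts attain the full rate $h^{s}$ with their respective $\boldsymbol{H}_\mathrm{pw}^{s}$-type norms. The constant $C$ absorbs $C_\mathrm{C}$ from \eqref{eq:quasioptimality} together with the approximation constants $C_0,C_{1,1},C_{1,2}$ of Lemma~\ref{lemma:approxProp}.
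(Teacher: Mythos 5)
Your proposal is correct and follows the same skeleton as the paper's proof: quasi-optimality \eqref{eq:quasioptimality} reduces everything to the two best-approximation errors, which are then bounded via Lemma \ref{lemma:approxProp}, the only genuine obstacle being that the Neumann infimum runs over the solenoidal subspace $\mathbb{S}^{1,\perp}_{\bpt,0}(\Gamma)$ while Lemma \ref{lemma:approxProp} concerns the full space $\mathbb{S}^{1,\perp}_{\bpt}(\Gamma)$. You and the paper resolve this obstacle by the same structural fact, commutativity of the de~Rham complex, but in different places. The paper argues in the volume: it takes the commuting projection $\Pi: \hcurl \rightarrow \mathbb{S}^1_{\bp}(\Omega)$ of \cite[Appendix~A]{buffa2020multipatch}, composed with the orthogonal projection $\mathsf{P}$ to respect the quotient structure, and deduces from $\Div_\Gamma \ntrace \bvl = \gamma_\bn \bcurl \bcurl \bvl = 0$ that the induced Neumann approximant $\ntrace \bvl$ is solenoidal, hence lies in $\mathbb{S}^{1,\perp}_{\bpt,0}(\Gamma)$. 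You argue directly on $\Gamma$: a commuting surface quasi-interpolant maps the solenoidal datum $\bphi$ into $\mathbb{S}^{1,\perp}_{\bpt,0}(\Gamma)$ at no loss of rate, or equivalently one writes $\bphi = \bcurl_\Gamma \varphi$ by exactness and approximates the scalar potential in $\mathbb{S}^0_{\bpt}(\Gamma)$. Your surface route is the more self-contained of the two: it never needs a volume field behind $\bphi$, and it sidesteps the paper's somewhat delicate step that $\gamma_\bn \bcurl \bcurl \bvl$ vanishes for the projected discrete field, whereas the paper's volume construction has the side benefit of simultaneously producing the quotient-space projector $\Pi_1 = \Pi \circ \mathsf{P}$ that it uses for the FEM unknown and for discrete well-posedness. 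One caveat common to both proofs: estimate \eqref{eq:approxHcurl} is stated only for $2 < s \leq p$, so invoking it at regularity index $2$ in the two low-regularity regimes (to then use $h^{2} \leq h^{s}$) is formally outside the stated range of the lemma; this imprecision is inherited from the paper's Lemma \ref{lemma:approxProp} and is not specific to your argument.
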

\begin{proof}
Due to Theorem \ref{theorem:wellposedness}, Problem \ref{problem:varProb} possesses a unique solution in $[\hcurl]\times \hdivzg$. Moreover, a conform Galerkin discretization means that well-posedness of the discrete Problem \ref{problem:varProbdisc} follows from the well-posedness of the continuous setting. Indeed, let $\bu \in \hcurl$. We know from \cite[Appendix~A]{buffa2020multipatch} that there exists a projection $\Pi: \hcurl\rightarrow \mathbb{S}_{\bp}^1(\Omega)$, which commutes with the respective differential operator, namely, $\Pi \circ \bcurl = \bcurl\circ\,\Pi$. Let $ \mathsf{P}: \hcurl \rightarrow (\nabla H^1(\Omega))^\perp \subset \hcurl$ be an orthogonal projection, which is defined analogously to the one used in the proof of Proposition \ref{proposition:representative}. Then, $\Pi_1 :=\Pi\circ \mathsf{P}: \hcurl \rightarrow [\mathbb{S}_{\bp}^1(\Omega)] \cong (\nabla \mathbb{S}_{\bp}^0(\Omega))^\perp \subset \mathbb{S}_{\bp}^1(\Omega)$ defines a projection, which retains the same convergence rates w.r.t. $h$-refinement as the ones given by $\Pi$, due to the optimality of orthogonal projections. \\ Furthermore, for smooth enough functions, the commutativity of the de~Rham complex leads to
\begin{equation*}
\gamma_\bn  \circ \bcurl =  \Div_\Gamma \circ \twtrace.
\end{equation*}
Hence, by using $\ntrace = \twtrace \bcurl$, we obtain for $\bul = \Pi\bcurl\bv = \bcurl \bvl$ with $\bvl := \Pi \bv$ \[\Div_\Gamma \ntrace \bvl = \gamma_\bn \bcurl \bcurl\bvl = 0.\] Therefore, $\bpsil := \ntrace \bvl \in \mathbb{S}^{1,\perp}_{\bpt}(\Gamma)$ satisfies $\Div_\Gamma \bpsil = 0$, i.e., $\bpsil \in \mathbb{S}^{1,\perp}_{\bpt,0}(\Gamma)$.
With this, the approximation results of Lemma \ref{lemma:approxProp} and \eqref{eq:quasioptimality} may be utilized, which lead merely to the asserted a~priori estimates.
\end{proof}
In the following, we give analoga to \cite[Theorem~6]{EEK20} and \cite[Remark~6]{EEK20}, where the possible super-convergence of the solution of the scalar problem in the BEM domain was discussed. This behavior represents one of the major benefits of using BEM, in particular if the exterior solution or some derived quantities, such as force or torque, are sought. For notational simplicity, let 
\[\bH_\mathrm{pw}^s:= (\hcurl \cap \boldsymbol{H}_\mathrm{pw}^{s}(\bcurl,\Omega))\times ( \hdivg\cap \boldsymbol{H}_\mathrm{pw}^{s}(\Div_\Gamma,\Gamma))\]
be a product space of patchwise regularity. Its corresponding norm, denoted by $\norm{\cdot}_{\bH^s_\mathrm{pw}}$, is defined accordingly to Definition \ref{def:patchwiseReg}.  
\begin{theorem}\label{aubinNitsche}
Let $\mathfrak{F}\in\bHl'$ be a continuous and linear functional. We denote by $\uvec:=(\bu,\bphi) \in \bH:= [\hcurl] \times \hdivzg $ the solution of the continuous problem \ref{problem:varProb}, and by $\uvecl:=(\bul,\bphil) \in \bHl := \mathbb{S}_{\bp}^1(\Omega) \times \mathbb{S}^{1,\perp}_{\bpt,0}(\Gamma)$ the solution of the discrete problem \ref{problem:varProbdisc}. Moreover, let $\mathbf{w} \in \bH$ be the unique solution of the dual problem
\begin{equation} \label{dualP}
\ba(\mathbf{v},\mathbf{w}) = \mathfrak{F}(\mathbf{v})
\end{equation} 
for all $\mathbf{v} \in \bH$. 
Then, there exists a constant $C_1>0$ such that
\begin{align}\label{functional1}
\vert \mathfrak{F}(\mathbf{u}) - \mathfrak{F}(\mathbf{u}_\ell) \vert &
\leq C_1 \,   \Vert \mathbf{u} - \mathbf{v}_\ell \Vert_{\bH} \Vert \mathbf{w} - \mathbf{z}_\ell \Vert_{\bH}
\end{align}
for arbitrary $\mathbf{v}_\ell\in \bHl$, $\mathbf{z}_\ell\in\bHl$.
Furthermore, provided $\uvec \in \bH_\mathrm{pw}^s$ and $\wvec \in \bH_\mathrm{pw}^t$ with $2 < s,t \leq p$, there exists a constant $C_2>0$ such that 
\begin{align}
\label{functional2}
\vert \mathfrak{F}(\mathbf{u}) - \mathfrak{F}(\mathbf{u}_\ell) \vert &
\leq C_2 \,  h^{s+t} \Vert \mathbf{u}\Vert_{\bH_\mathrm{pw}^s} \Vert \mathbf{w} \Vert_{\bH_\mathrm{pw}^t}.
\end{align}
\end{theorem}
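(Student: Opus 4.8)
The plan is to run an Aubin--Nitsche duality argument, adapted to the monotone nonlinear setting of Problem~\ref{problem:magnetostaticVF}. Since $\mathfrak{F}$ is linear, I would first write $\mathfrak{F}(\uvec) - \mathfrak{F}(\uvecl) = \mathfrak{F}(\mathbf{e})$ with $\mathbf{e} := \uvec - \uvecl \in \bH$. The genuine difficulty is that $\ba$ is nonlinear in its first argument, so the dual problem \eqref{dualP} has to be understood through a \emph{linearised} (secant) bilinear form. Concretely, I would freeze the coefficient of the nonlinearity along the segment between $\bcurl\bu$ and $\bcurl\bul$: by a mean value argument (using that the Lipschitz map $\M$ is differentiable a.e.\ with $D\M$ bounded by $C^\M_\mathrm{L}$ and, by \ref{A2}, of symmetric part bounded below by $C^\M_\mathrm{M}$), there is a bounded, uniformly positive-definite coefficient field $\mathsf{M}_\mathrm{sec} = \bigl(\int_0^1 D\M(\bcurl\bul + \tau\,\bcurl(\bu-\bul))\opd\tau\bigr)$ with $\M\bcurl\bu - \M\bcurl\bul = \mathsf{M}_\mathrm{sec}\,\bcurl(\bu-\bul)$. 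Replacing $\scalar{\M\bcurl\,\cdot,\bcurl\,\cdot}$ by $\scalar{\mathsf{M}_\mathrm{sec}\bcurl\,\cdot,\bcurl\,\cdot}$ (and keeping the already bilinear BEM terms) yields a genuine bilinear form $\ba_\mathrm{sec}$ whose evaluation at $(\mathbf{e},\vvec)$ reproduces $\ba(\uvec,\vvec) - \ba(\uvecl,\vvec)$. Assumptions \ref{A1}--\ref{A2} and the norm equivalences from the proof of Theorem~\ref{theorem:monotonicity} make $\ba_\mathrm{sec}$ bounded (with constant $C_\mathrm{L}$) and coercive on $\bH$, so the linear dual problem $\ba_\mathrm{sec}(\vvec,\mathbf{w}) = \mathfrak{F}(\vvec)$ for all $\vvec\in\bH$ is well posed by Lax--Milgram.

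Testing the dual problem with $\vvec = \mathbf{e}$ gives the key identity $\mathfrak{F}(\mathbf{e}) = \ba_\mathrm{sec}(\mathbf{e},\mathbf{w})$. Galerkin orthogonality then enters: for every $\mathbf{z}_\ell \in \bHl \subset \bH$ the continuous and discrete Problems~\ref{problem:varProb} and~\ref{problem:varProbdisc} yield $\ba(\uvec,\mathbf{z}_\ell) = \bell(\mathbf{z}_\ell) = \ba(\uvecl,\mathbf{z}_\ell)$, hence $\ba_\mathrm{sec}(\mathbf{e},\mathbf{z}_\ell) = 0$. Consequently $\mathfrak{F}(\mathbf{e}) = \ba_\mathrm{sec}(\mathbf{e},\mathbf{w}-\mathbf{z}_\ell)$ for any $\mathbf{z}_\ell \in \bHl$, and boundedness of $\ba_\mathrm{sec}$ gives $\abs{\mathfrak{F}(\mathbf{e})} \le C_\mathrm{L}\,\norm{\mathbf{e}}_\bH\,\norm{\mathbf{w}-\mathbf{z}_\ell}_\bH$. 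Finally the Céa-type quasi-optimality \eqref{eq:quasioptimality} bounds $\norm{\mathbf{e}}_\bH = \norm{\uvec-\uvecl}_\bH \le C_\mathrm{C}\,\norm{\uvec-\mathbf{v}_\ell}_\bH$ for arbitrary $\mathbf{v}_\ell \in \bHl$. Combining the two bounds proves \eqref{functional1} with $C_1 = C_\mathrm{L}C_\mathrm{C}$, valid for arbitrary $\mathbf{v}_\ell,\mathbf{z}_\ell$.

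For \eqref{functional2} I would pass to the infimum over $\mathbf{v}_\ell,\mathbf{z}_\ell \in \bHl = \mathbb{S}^1_{\bp}(\Omega)\times\mathbb{S}^{1,\perp}_{\bpt,0}(\Gamma)$ in \eqref{functional1} and insert the approximation estimates of Lemma~\ref{lemma:approxProp}, exactly as in the proof of Theorem~\ref{theorem:PrioriEstimate}; for $2<s\le p$ the primal factor is $\le C h^{s}\norm{\uvec}_{\bH_\mathrm{pw}^s}$ and, for $2<t\le p$, the dual factor is $\le C h^{t}\norm{\mathbf{w}}_{\bH_\mathrm{pw}^t}$, where one again uses the commuting projection of Theorem~\ref{theorem:PrioriEstimate} to guarantee that the approximation of the dual Neumann datum lies in $\mathbb{S}^{1,\perp}_{\bpt,0}(\Gamma)$. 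The product of the two rates is $h^{s+t}$, which is \eqref{functional2}. The main obstacle is precisely the nonlinearity: legitimising the dual problem and transferring Galerkin orthogonality through the secant form $\ba_\mathrm{sec}$, and verifying its boundedness and coercivity with constants independent of $\ell$. A related subtlety is that $\ba_\mathrm{sec}$ depends on $(\uvec,\uvecl)$, so a priori $\mathbf{w}$ is $\ell$-dependent; the regularity hypothesis $\mathbf{w}\in\bH_\mathrm{pw}^t$ must then be read uniformly in $\ell$ (equivalently, one may linearise at $\uvec$ and absorb the higher-order linearisation remainder). In the linear Laplace setting of \cite{EEK20} these issues are absent and the argument collapses to the classical one.
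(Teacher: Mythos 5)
Your proposal is correct and takes essentially the same route as the paper: the paper's own proof is a one-line citation to the Aubin--Nitsche argument of \cite[Theorem~6]{EEK20} and \cite[Theorem~4.2.14]{sauter}, which is exactly the duality-plus-Galerkin-orthogonality-plus-C\'ea chain you carry out, followed by the approximation estimates of Lemma \ref{lemma:approxProp} with the commuting projection of Theorem \ref{theorem:PrioriEstimate}. Your secant linearisation $\ba_\mathrm{sec}$ of the dual problem, together with the caveats you flag (the $\ell$-dependence of $\wvec$, the a.e.-differentiability argument for Lipschitz $\M$, and the implicitly required condition $C_\mathrm{M}^\M > \frac{1}{4}C_\DL$ for coercivity), supplies precisely the detail that the paper's statement of \eqref{dualP} and its citation-only proof leave implicit in the nonlinear case.
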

\begin{proof}
The assertions can be shown analogously to \cite[Theorem~6]{EEK20}, where the proof is based on an Aubin-Nitsche argument similar to \cite[Theorem~4.2.14]{sauter} .
\end{proof}

\begin{remark}\label{rem:dependence_pos}
In particular, the functional of Theorem \ref{aubinNitsche} may be, e.g., the representation formula \eqref{eq:representationFormula} of the exterior domain $(\alpha = 1)$.
Assuming a maximal regularity for $\uvec$ and $\wvec$ in \eqref{functional2}, i.e., $\uvec\in\bH_\mathrm{pw}^p$ and $\wvec\in\bH_\mathrm{pw}^p$ yields the following pointwise error: For $\bx \in \Omega^\mathrm{e}$, it holds 
\begin{align}
\label{eq:superconvergence}
|\bu^\mathrm{e}(\bx) - \bul^\mathrm{e}(\bx)|=|\mathfrak{F}(\uvec^\mathrm{e})(\bx)-\mathfrak{F}(\mathbf{u}_\ell^\mathrm{e})(\bx)|\leq C h^{2p}, \quad C>0.
\end{align}
This suggests that under some regularity assumptions, the converge rate may double. This behavior is called super-convergence. Analogously to \cite[Remark~6]{EEK20}, the constant $C$ in \eqref{eq:superconvergence} depends in particular on the fundamental solution $G(\bx,\by)$. Hence, it tends to infinity, when approaching the boundary. However, suitable numerical schemes similar to \cite{schwab1999extraction} may be considered to reduce this non-desirable effect, thus, to profit from the faster convergence. 
\end{remark}
\section{Numerical illustration }\label{sec:results}
In this section, we confirm the optimal convergence rates of the isogeometric non-symmetric FEM-BEM coupling as expected from Theorem~\ref{aubinNitsche} and Remark~\ref{rem:dependence_pos}. The implementation of Boundary Integral Operators (BIOs) is performed by utilizing \textsf{BEMBEL}, which is an efficient and freely-available C++ library that provides the necessary building-blocks for the assembling of BIOs in the isogeometric framework \cite{dolz2020bembel}. In particular, we need the vectorial single-layer and double-layer BIOs, which arise from $\duality{\bpsil, \SL \bphil}_\Gamma$ and $\duality{\bpsil , \DL \dtrace \bul}_\Gamma$, respectively, and a boundary mass matrix that is given by $\duality{\bpsil , \dtrace \bul}_\Gamma$. The rest of the implementation, which includes the stiffness matrix of the isogeometric FEM-part, the right-hand sides, the coupling procedure, as well as solving and post-processing are performed in \textsf{MATLAB/Octave}. We use thereby some procedures of \textsf{GeoPDEs}, see \cite{geopdes,VAZQUEZ2016523}, in particular for the assembling of the FEM-matrix, and the construction of multipatch geometries, which is made with the included \textsf{NURBS toolbox}. \\
 
In this work, we only consider uniform $h$-refinements. That is, in every refinement level $\ell \geq 0$, the element size $h$, which corresponds to the length of a univariate element in the parametric domain, is equal to $2^{-\ell}$. Note that the obtained system of linear equations is singular due to the infinite-dimensional kernel of the $\bcurl$-operator in the FEM-domain. However, by considering a consistent right-hand side, i.e., divergence-free, and by applying suitable preconditioning techniques, an iterative CG-based solver converges properly. To ensure consistency on the discrete level, a correction of the data as proposed in \cite{kanayama2001numerical}, for instance, may be necessary. Furthermore, we refer the reader to, e.g., \cite{hiptmair2007nodal} and \cite{steinbach1998construction} for efficient FEM and BEM preconditioning, respectively.\\ In the following, we perform the numerical analysis on an academic example consisting in a uniformly magnetized unit ball with the magnetization $\boldsymbol{m} = (0,0,1)^\top$. We denote the unit ball by $B_3(\boldsymbol{0}\,;1)$, namely, a three-dimensional ball with radius $r = 1$ centered at the origin of the coordinate system.
In addition, we consider the simplest material form, which is air. Hence, $\M = \Id$, which implies that the vaccum permeability $\mu_0$ has been normalized, i.e., $\mu_0 = 1$. 
An analytical solution for this problem is given in \cite[Chapter~5.10]{jackson2009classical}. Let $(\rho,\varphi,\theta)$ and $(u_\rho, u_\varphi,u_\theta)$ denote the coordinates and the components of a vector field $\bu$ in a standard spherical system, respectively. For this setting, only the azimuthal (third) component of the solution is non-zero. It reads for $\rho \leq 1$ 
\begin{equation}\label{eq:exactSolInt}
	u_\theta = \frac{\rho}{3}  \sin \theta,
\end{equation}
and for $\rho > 1$
\begin{equation}\label{eq:exactSolExt}
 u^\mathrm{e}_\theta = \frac{1}{3\rho^2}  \sin \theta.
\end{equation}
To solve the non-symmetric FEM-BEM coupling, we need to specify the corresponding right-hand sides for Problem \ref{problem:varProb}, which can be derived by using the exact solutions $\bu = (0,0,u_\theta)$ and $\bue = (0,0,u^\mathrm{e}_\theta)$. Concretely, we prescribe 
\begin{equation}
	\boldsymbol{f}= \boldsymbol{0}, \quad \boldsymbol{u}_0 = \boldsymbol{0}, \quad \bphi_0 = \bn \times \boldsymbol{m},
\end{equation}
where $\bn$ is an outer pointing normal vector. Note that $\boldsymbol{f}= \boldsymbol{0}$ means that there are no impressed currents and that the volume contribution of the magnetization vanishes. \\
\begin{figure}[!t]
	\begin{center}
		\includegraphics[width=10cm]{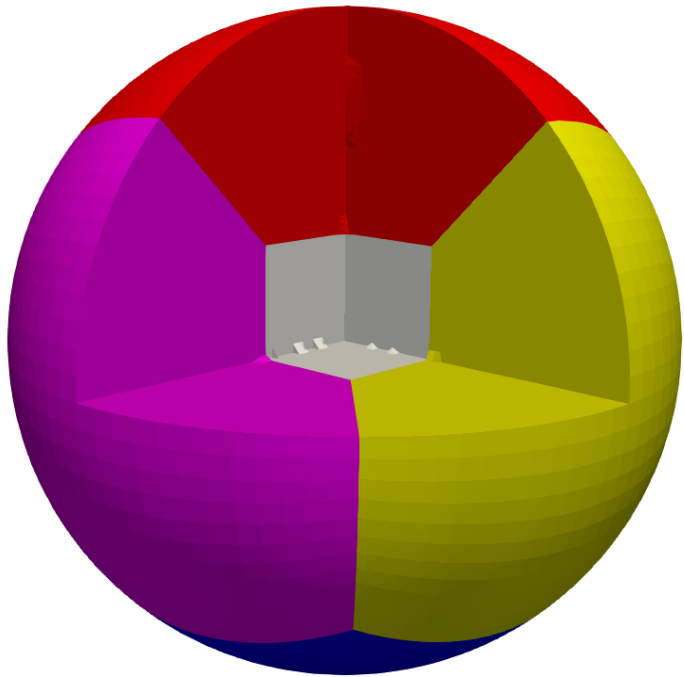}
	\end{center}
	\caption{Multipatch representation of the ball $B_3(\boldsymbol{0}\,;1)$. It consists of seven patches that are glued at interfaces. For visualization purpose, we show only the domain $B_3(\boldsymbol{0}\,;1)\backslash[0,1]^3$.} \label{fig:multipatchDomain}
\end{figure}%
For the isogeometric approach, the ball $B_3(\boldsymbol{0};1)$ has to be represented as a multipatch domain. For instance, we propose in Figure \ref{fig:multipatchDomain} a possible representation. It is composed of seven NURBS patches that are glued at interfaces, as explained in Section \ref{sec:iga}. Note that the choice of the patches is arbitrary, as long as the parametrization \eqref{eq:parametrization} is regular. \\
\begin{figure}[!tb]
	\begin{center}
		\begin{tikzpicture}
		\node at (0,0) 			{\includegraphics[width=12cm]{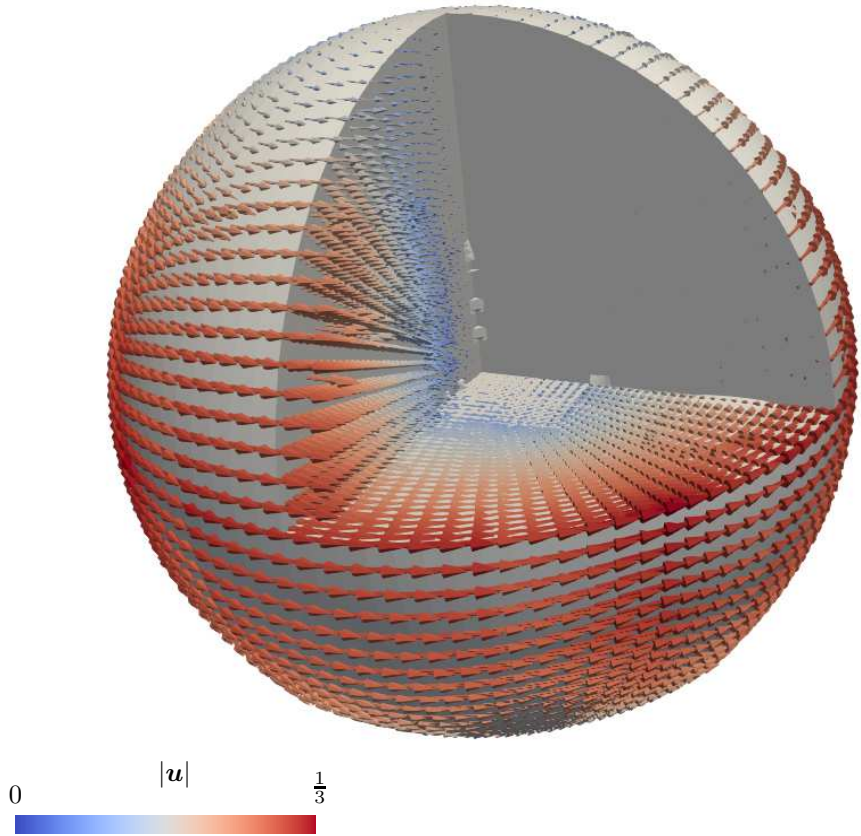}};
		\node[right] at (-4.25,-5) {$\vert\bu\vert$};
   	 	\node[above] at (-6,-5.5) {$0$};
		\node[above] at (-2,-5.5) {$\frac{1}{3}$};
		\end{tikzpicture}
	\end{center}
	\caption{Interior solution depicted in the domain $B_3(\boldsymbol{0}\,;1)\backslash[0,1]^3$. The arrows encode the solution $\bu$, and the color bar its magnitude. The considered B-Spline spaces have the degree $p = 2$, and a refinement level $\ell = 3$.}\label{fig:solution}
\end{figure}

To solve the discrete Problem~\ref{problem:varProbdisc}, we use the BiCGSTAB solver of MATLAB/Octave with a tolerance of $10^{-6}$. For convenience, an interior solution with a polynomial degree $p = 2$ at a refinement level $\ell = 3$ for the discrete ansatz space $\mathbb{S}_{\bp}^1(\Omega) \times \mathbb{S}^{1,\perp}_{\bpt,0}(\Gamma)$ is shown in Figure~\ref{fig:solution}. For visualization purpose, it is depicted only in $B_3(\boldsymbol{0}\,;1)\backslash[0,1]^3$. \\
\begin{figure}[!t]
	\centering
	\begin{tikzpicture}
	\begin{loglogaxis}[width=10cm,
	xlabel={\small $h^{-1}$},
	ylabel={\small error},grid=major,
	legend entries={$ p =1 $,$ p = 2 $},
	legend style={font=\small,at={(0.025,0.025)},
		anchor=south west,thin}
	]
	\addplot [color=black, mark=o, line width=0.5pt, mark size=2.5pt, mark options={solid, black}] table {data/convergence_p1_ae__error_2_0to4.dat};
	\addplot [color=red, mark=square, line width=0.5pt, mark size=2.5pt, mark options={solid, red}] table {data/convergence_p2_ae__error_2_0to4.dat};
	\logLogSlopeTriangleInv{0.85}{-0.12}{0.235}{-4}{black}{$-4$};
	\logLogSlopeTriangleInv{0.85}{-0.12}{0.55}{-2}{black}{$-2$};
	\end{loglogaxis}
	\end{tikzpicture}
	\caption{Convergence of the exterior solution. 
The $\text{error} = \max_{i=1,\ldots,N} \vert \bu^\mathrm{e}(\bx_i) - \bu^\mathrm{e}_\ell(\bx_i) \vert$ is calculated with $N=20$ evaluation points on $\partial B_3(\boldsymbol{0}\,;1.5)$. The considered B-Spline spaces have the degrees $p=1,2$, respectively.}\label{fig:convergenceExt}
\end{figure}
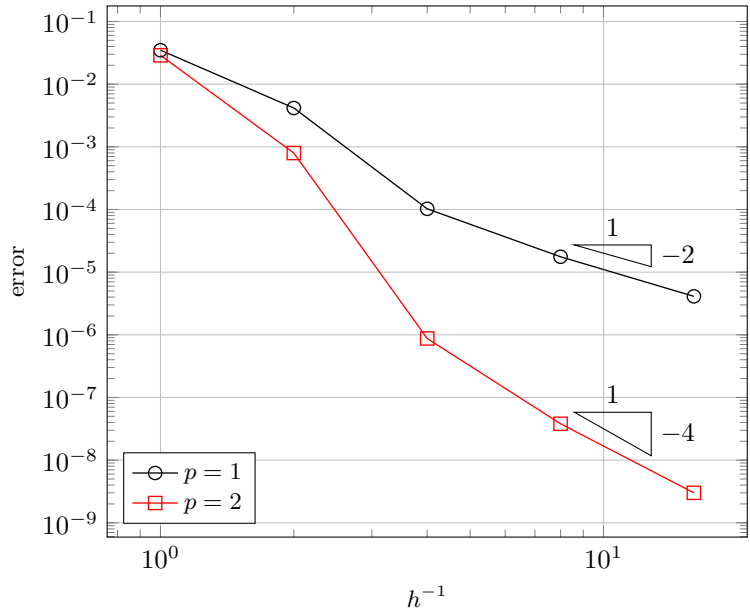%
With the solution $(\bul,\bphil)$ of Problem \ref{problem:varProbdisc} at hand, we evaluate the exterior solution at some $\bx \in \Omega^\mathrm{e}$ by using the functional
 \begin{equation*}
\mathfrak{F}(\mathbf{u}_\ell^\mathrm{e})(\bx) := \bu^\mathrm{e}_\ell(\bx) =  -\left(\bPsi_\mathrm{SL}(\bphil)(\bx) + \bPsi_\mathrm{DL}(\dtrace \bul)(\bx) \right),
\end{equation*}       
which follows from the representation formula \eqref{eq:representationFormula} by setting $\gamma^\mathrm{e}_\bn \bue = 0$ and $\bu_{0,\ell} = \boldsymbol{0}$.\\
Now, we consider an $h$-refinement for different polynomial degrees $p = 1,2$ and verify the convergence behavior of the exterior solution. For the evaluation of the pointwise error 
\begin{equation}
\label{pointwiseError}
\text{error} = \max_{i=1,\ldots,N} \vert \bu^\mathrm{e}(\bx_i) - \bu^\mathrm{e}_\ell(\bx_i) \vert,
\end{equation}
we choose $N=20$ equally distributed points on the boundary $\partial B_3(\boldsymbol{0}\,;1.5)$. 
In Figure \ref{fig:convergenceExt}, the expected doubling of the convergence rates, as pointed out in Remark \ref{rem:dependence_pos}, is observed. \\

The current implementation is only made to prove the concept, since it is only suitable for medium sized problems. A more involved one would require fast assembling techniques for the dense BEM matrices as given in \cite{doelz2019isogeometric}, for instance, as well as efficient preconditioning for iterative solvers. In particular, it is worth mentioning that an efficient preconditioning technique, which is tailored for B-Spline spaces still poses an open problem. For these reasons, we postpone practical convergence analysis for non-linear equations to future works. Nevertheless, we expect standard techniques for non-linear equations such as a Picard-iteration scheme, as done, e.g., in \cite{EEK20} for the two-dimensional scalar case, to satisfy the theoretical estimates of Theorem \ref{theorem:PrioriEstimate} and Remark \ref{rem:dependence_pos}.  
\section{Conclusions and Outlook}\label{sec:conclusion}
We established in this work the well-posedness of the Johnson-N\'ed\'elec coupling for the magnetostatic vector potential formulation as an alternative to other approaches. Besides, our analysis allows non-linear equations in the interior domain. For this, we considered the standard framework of Lipschitz and strongly monotone operators. To counteract the infinite-dimensional kernel of the $\bcurl$-operator, we projected the equations onto suitable quotient spaces, and showed there the validity of the arising weak formulation. The proofs were made under the assumption that the domain is simply-connected. However, this can be extended to more general topologies. Furthermore, we derived a~priori estimates for a conforming discretization using B-Spline spaces that satisfy the de~Rham sequence. Thanks to their definition, B-Splines allow a straightforward $h$- and $p$- refinement, without altering the original geometry, which is represented exactly with these bases (in some cases, NURBS or other generalizations can be required instead). By means of an academic problem, we verified the predicted convergence rate in the exterior domain, which underlines one major advantage of using BEM in the context of electromagnetics, where in general not only the solution is aimed at but also some derived quantities such as the magnetic field in some points of the space, or forces and torques acting on subdomains. In future works, this method could be naturally extended both numerically and analytically to incorporate more general topologies, and multiple domains. This would provide a rigorous analysis of relevant applications in engineering sciences, such as electric motors.       
\nocite{*}
\bibliography{igafembemcurlcurl_preprint} 
\bibliographystyle{alpha}
\end{document}